\newtheorem{thm}{Theorem}[section]
\newtheorem{lem}[thm]{Lemma}
\newtheorem{cor}[thm]{Corollary}
\newtheorem{prop}[thm]{Proposition}
\theoremstyle{definition}
\newtheorem{prob}{Problem}
\newtheorem{rk}[thm]{Remark}
\theoremstyle{remark}
\newcommand{\Com}{\mathop{\mathrm{Com}}}
{\setcounter{tocdepth}{2}}
\newcommand{\lcm}{\mathop{\mathrm{lcm}}}
\begin{document}

\title{Matching in Power Graphs of Finite Groups}
\author{Peter J Cameron\footnote{School of Mathematics and Statistics, University of St Andrews, North Haugh, St Andrews, Fife, KY16 9SS, UK; pjc20@st-andrews.ac.uk, ORCiD 0000-0003-3130-9505},
Swathi V V\footnote{Department of Mathematics, National Institute of Technology Calicut, Kozhikode - 673601, India; swathivv14@gmail.com}\
and M S Sunitha\footnote{Department of Mathematics, National Institute of Technology Calicut, Kozhikode - 673601, India; sunitha@nitc.ac.in}}
\date{}
\maketitle

\begin{abstract}
The power graph $P(G)$ of a finite group $G$ is the undirected simple graph
with vertex set $G$, where two elements are adjacent if one is a power of
the other. In this paper, the matching numbers of power graphs of finite groups
are investigated. We give upper and lower bounds, and conditions for the power
graph of a group to possess a perfect matching. We give a formula for the
matching number for any finite nilpotent group. In addition, using some
elementary number theory, we show that the matching number of the enhanced
power graph $P_e(G)$ of $G$ (in which two elements are adjacent if both are
powers of a common element) is equal to that of the power graph of $G$.

\textbf{Keywords:} group, power graph, matching, enhanced power graph,
perfect matching.

Mathematics Subject Classification 2010: 05C25
\end{abstract}

\section{Introduction}
Associating graphs to algebraic structures is an interesting research topic. Cayley graphs, intersection graphs, zero divisor graphs, commuting graphs and power graphs  are some examples of graphs constructed from semigroups and groups.

 The directed power graph was first proposed in 2002 by Kelarev and Quinn \cite{ref6}. For a semigroup $S$ , the directed power graph $\overrightarrow P(S)$ is a graph with vertex set $S$ and there exists an arc from the vertex $x$ to the vertex $y$ if and only if $y=x^n$ for some natural number $n\in \mathbf{N}$. Motivated by this, Chakrabarty et al.~\cite{ref3} defined the undirected power graph of a semigroup. The undirected power graph $P(S)$ of a semigroup $S$ is a graph whose vertex set is $S$ and the edge set consists of pairs of distinct vertices $x$ and $y$ if $y=x^j$ or $x=y^k$ for some $j,k \in \mathbb{N}$.
 
 Many fascinating results on directed and undirected power graphs of finite semigroups and groups were established by several authors. The authors of \cite{ref3} proved that the power graphs of  ﬁnite groups are always connected. The authors also proved that the necessary and sufficient condition for $P(G)$ to be complete is that $G$ is a finite cyclic group having order $1$ or $p^i$, for some prime number $p$ and positive integer $i$.
 
 Curtin and Pourgholi \cite{ref4,ref5} proved that power graphs of cyclic groups have the largest clique and maximum number of edges, among all ﬁnite groups of a given order.
 
 In \cite{ref2}, the first author and Shamik Ghosh proved that finite abelian groups having isomorphic power graphs are isomophic. They also showed that Klein-4 group is the only finite group whose automorphism group is isomorphic to that of its power graph. In \cite{ref1}, the first author proved that finite groups having isomorphic power graphs have isomorphic directed power graphs.
 
   The chromatic number of power graphs of finite groups is investigated in \cite{ref7} and \cite{ref9} and some results on the independence number of the same is proved in \cite{ref8}. 
 
 A graph $\Gamma = (V,E)$ is an undirected simple graph $\Gamma$ with vertex set $V$ and edge set $E$.  A matching or independent edge set $M$ in a graph $\Gamma$ is a set of edges in which no two of them share a common vertex. A vertex is said to be matched (or saturated) if it is incident to one of the edges in the matching.  Otherwise the vertex is unmatched. A matching $M$ of a graph $\Gamma$ is a maximal matching if it is not a subset of any other matching in $\Gamma$. A matching that contains the largest possible number of edges is called a maximum matching. The size of a maximum matching in a graph $\Gamma$ is known as its matching number, which is denoted by $\mu(\Gamma)$. A perfect matching is a matching which saturates all vertices of the graph. 
 
 Let $G$ be a finite group. Together with the power graph, the enhanced power graph and the commuting graph are some of the examples of graphs whose vertex set is $G$ and whose edges reflect the group structure in some way. 
In the \emph{enhanced power graph} of $G$,denoted by $P_e(G)$, two vertices $x$ and $y$ are adjacent if and only if $\langle x,y \rangle$ is cyclic, and in the \emph{commuting graph} $\Com(G)$ of $G$, two vertices are adjacent if they commute. 
 
We denote the order of a group $G$ by $|G|$, while for $a\in G$,
the order of the element $a$ is denoted by $o(a)$.

 In this paper, we concentrate on finding matchings in the power graphs. We investigate several class of groups to obtain groups whose power graphs have a perfect matching.
In particular, we find the size of a maximum matching in the
power graph of any abelian group.
We also include some results we obtained on the enhanced power graph as well as commuting graph. In particular, and a little surprisingly, the power graph and
enhanced power graph have the same matching number.

\section{A preliminary result}

We begin this section by noting that a finite group $G$ of odd order has 
matching number $(|G|-1)/2$; that is, a maximum matching leaves just one
vertex unmatched. To see this, note that, for any $x\in G$, if $x\ne1$, then
$x\ne x^{-1}$ and $\{x,x^{-1}\}$ is an edge of $P(G)$; these edges form a
matching of the required size.

For groups of even order, we begin with the following observation.

\begin{thm}
Let $G$ be a group of even order. Let $T=\{g\in G:g^2=1\}$ be the set
consisting of the identity and the involutions in $G$. Let $X(G)$ be a graph
with vertex set $G$ with the property that every element of $G\setminus T$ is
joined to its inverse. Then there is a maximum-size matching in $G$ for which
the set of unmatched vertices is contained in $T$.
\end{thm}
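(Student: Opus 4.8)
The plan is to use a double extremality argument. Among all matchings of $X(G)$ of maximum size, I would pick one, $M$, that additionally maximizes the number of its edges of the form $\{x,x^{-1}\}$ with $x\in G\setminus T$; such edges do exist in $X(G)$ by hypothesis, and they are genuine (non-loop) edges because $x\neq x^{-1}$ for $x\notin T$. I then claim that every vertex left unmatched by this $M$ lies in $T$, which is what we want.

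For the contradiction, suppose $x\in G\setminus T$ is unmatched by $M$. Then $x^{-1}$ must be matched, since otherwise $M\cup\{\{x,x^{-1}\}\}$ would be a strictly larger matching; say $\{x^{-1},w\}\in M$, and note $w\neq x$ because $x$ is unmatched. Replacing the edge $\{x^{-1},w\}$ by $\{x,x^{-1}\}$ produces a set $M'$ which is readily checked to be a matching of the same size (only the statuses of $x$, $x^{-1}$ and $w$ change, with $x,x^{-1}$ now matched to each other and $w$ now unmatched), hence still a maximum matching. Since $\{x^{-1},w\}$ is not of the special inverse form (that would force $w=x$) while $\{x,x^{-1}\}$ is, the matching $M'$ has strictly more inverse-pair edges than $M$, contradicting the choice of $M$. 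This proves the theorem, with the set of unmatched vertices contained in $T$.

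If one prefers not to invoke the second extremal condition, there is an equivalent alternating-path route: let $N$ be the perfect matching of the subgraph of $X(G)$ induced on $G\setminus T$ obtained by pairing each element with its inverse, and start from any maximum matching $M$. Every component of $M\cup N$ has maximum degree $2$, hence is a path or an even cycle, and an unmatched vertex $x\in G\setminus T$ sits at the end of an $M$-alternating path that begins with the $N$-edge $\{x,x^{-1}\}$. Maximality of $M$ forbids this path from being $M$-augmenting, which forces its far endpoint to have no $N$-edge, i.e.\ to lie in $T$; toggling $M$ and $N$ along this path keeps the matching maximum while decreasing the number of unmatched vertices outside $T$, and iterating clears them all. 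In either version the only real subtlety is that one cannot simply match inverse pairs greedily and expect to land on a maximum matching; the actual content of the argument is the choice of which object to hand to the maximality hypothesis.
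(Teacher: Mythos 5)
Your proposal is correct. Your main argument, however, takes a genuinely different route from the paper. The paper starts from an arbitrary matching and explicitly builds a chain $g_0,g_1,g_2,\dots$ (where $g_{i+1}$ is the vertex matched to $g_i^{-1}$), then performs a swap along this chain that either enlarges the matching or trades an unmatched vertex outside $T$ for one inside $T$, and iterates; this is exactly your second, alternating-path variant, since the paper's chain is precisely the $M$-alternating path through the edges of the inverse-pairing matching $N$ on $G\setminus T$, and the case distinction (chain ends at an unmatched $g_m^{-1}$ versus at $g_m\in T$) matches your augmenting-path versus toggle dichotomy. Your primary argument instead uses a double extremal choice: among maximum matchings, take one maximizing the number of inverse-pair edges $\{x,x^{-1}\}$ with $x\notin T$, and then a single local exchange (replace $\{x^{-1},w\}$ by $\{x,x^{-1}\}$, noting $w\neq x$ forces the removed edge not to be an inverse pair) yields the contradiction. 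This buys brevity: no path construction, no termination or iteration argument, just one exchange handed to the extremality hypothesis. What it gives up is the constructive flavour of the paper's proof, which exhibits an explicit procedure converting any given matching into one of the desired form; your alternating-path version recovers that, so between your two arguments you have both the slicker and the constructive proof.
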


\begin{proof}
Clearly $|T|>1$.

Take any matching $M$ of $X(G)$. We describe a transformation to another
matching $M'$ such that either $|M'|>|M|$, or $|M'|=|M|$ and the number of
unmatched vertices not in $T$ is smaller in $M'$ than in $M$.

Suppose that $g$ is an unmatched vertex which is not in $T$. If $g^{-1}$ is
also unmatched then we can match $g$ to $g^{-1}$, increasing the size of the
matching. So suppose $g^{-1}$ is matched. 

Put $g=g_0$. Let $g_1$ be the vertex matched to $g_0^{-1}$; let $g_2$ be the
vertex matched to $g_1^{-1}$; and so on, as long as possible. The process
terminates when either $g_m^{-1}$ is unmatched, or $g_m\in T$.

In the first case, we replace the edges $\{g_0^{-1},g_1\}$, $\{g_1^{-1},g_2\}$,
\dots, $\{g_{m-1}^{-1},g_m\}$ with the edges $\{g_0,g_0^{-1}\}$, 
$\{g_1,g_1^{-1}\}$, \dots,$\{g_m,g_m^{-1}\}$, and the size of the matching
is increased by one.

In the second case, we replace the edges $\{g_0^{-1},g_1\}$, $\{g_1^{-1},g_2\}$,
\dots, $\{g_{m-1}^{-1},g_m\}$ with the edges $\{g_0,g_0^{-1}\}$,
$\{g_1,g_1^{-1}\}$, \dots, $\{g_{m-1},g_{m-1}^{-1}\}$. The resulting matching
has the same size, but we have replaced the unmatched vertex $g_0\notin S$
by $g_m\in S$, so we have decreased by one the number of unmatched vertices
not in $T$.

Continuing this process, we find a matching of maximal size in which all
unmatched vertices belong to $T$.
\end{proof}

If a matching has the property of the theorem, and the identity is unmatched,
then we may add the edge $\{1,t\}$ to the matching, where $t$ is an involution.
So the following holds.

\begin{cor}
\begin{enumerate}
\item
Let $G$ be a group of even order. Then the size of a maximum matching in
the power graph or enhanced power graph of $G$ is at least $1+(|G|-|T|)/2$,
where $T$ consists of the identity and the involutions in $G$.
\item 
If $G$ is a group with a unique involution, then $P(G)$ has a perfect matching.
\end{enumerate}
\end{cor}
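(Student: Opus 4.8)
The plan is to derive both statements directly from the Theorem applied to $X(G)=P(G)$ or $X(G)=P_e(G)$, after checking that these graphs satisfy the hypothesis of the Theorem. For part (a), I would first observe that both the power graph and the enhanced power graph of $G$ have the property required of $X(G)$: if $g\in G\setminus T$, then $o(g)\geq 3$, so $g\neq g^{-1}$, and $g^{-1}=g^{o(g)-1}$ is a power of $g$, hence $\{g,g^{-1}\}$ is an edge of $P(G)$; since every edge of $P(G)$ is an edge of $P_e(G)$, the same holds there. So the Theorem applies and gives a maximum matching $M$ whose unmatched vertices all lie in $T$. If the identity $1$ is unmatched by $M$, then since $|G|$ is even we have $|T|>1$, so there is an involution $t$; if $t$ is also unmatched we adjoin the edge $\{1,t\}$ (note $1$ and $t$ are adjacent in $P(G)$ because $1=t^2$), contradicting maximality of $M$ unless no such unmatched $t$ exists — but if every involution is matched while $1$ is not, we can instead run the same ``augmenting path'' idea once more or simply argue as follows. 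Actually the cleanest route is: among all maximum matchings with unmatched set inside $T$, pick one; if $1$ is unmatched in it, some involution $t$ must also be unmatched (else, counting, the unmatched set would be $\{1\}$ together with matched involutions, and we could still match $1$ to any involution $t'$ — wait, $t'$ is matched). Let me instead just say: the unmatched vertices form an independent set contained in $T$, and within $T$ every pair is adjacent in $P(G)$ since $T$ induces a clique (any two elements of order dividing $2$: if $x\neq y$ both nonidentity, $\langle x\rangle$ and $\langle y\rangle$ have order $2$, not nested unless $x=y$ — so $T$ does \emph{not} induce a clique in $P(G)$, only $1$ is adjacent to all of $T$). Hence the unmatched set, being independent, contains at most one involution if it contains no identity, or is $\{1\}$, or $\{t\}$ for one involution — in every case its size is at most $\max(1,|T|-1)$... this needs care.

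The key step, then, is the counting. Let $u$ denote the number of unmatched vertices in a maximum matching $M$ provided by the Theorem, so $\mu(X(G))=(|G|-u)/2$ and $u\leq|T|$ with the unmatched set $U\subseteq T$ independent in $X(G)$. Since $U$ is independent and $1$ is adjacent to every other element of $T$ in $P(G)$ (because $1=t^2$ for any involution $t$), if $1\in U$ then $U=\{1\}$ and $u=1$; adjoining $\{1,t\}$ for an involution $t$ is then blocked only if $t\notin U$, i.e.\ $t$ is already matched — so in this case I would argue $u=1$ already forces $\mu=(|G|-1)/2\geq 1+(|G|-|T|)/2$ precisely when $|T|\geq 3$, and handle $|T|=2$ separately (there $1$ and the unique involution, if both were unmatched, could be paired, so $u\leq$ the number of involutions $=1$, giving again $\mu\geq(|G|-1)/2=1+(|G|-2)/2$). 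If $1\notin U$, then $U$ consists of involutions and, being independent in $P(G)$... but distinct involutions need not be adjacent, so $U$ could be large. Here I would strengthen the bookkeeping in the Theorem's proof — or re-apply it — to also force the identity to be matched whenever $|T|\geq 2$: start from $M$ with $U\subseteq T$; if $1$ is unmatched and some involution $t$ is unmatched, pair them; if $1$ is unmatched and all involutions are matched, pick any involution $t$ with match $t'$, and note $t'\in G$; if $t'$ is an involution or identity it is in $T$, and one checks the swap $\{t,t'\}\to\{1,t\}$ leaves $t'$ unmatched in $T$, no progress, so instead I would simply accept the bound as stated: $u\leq|T|$ gives $\mu\geq(|G|-|T|)/2$, and the ``$+1$'' comes from the single guaranteed improvement when $1$ is unmatched, which the paragraph before the Corollary already asserts. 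So part (a) reduces to: the Theorem gives $U\subseteq T$; if $1\in U$ replace $M$ by $M\cup\{\{1,t\}\}$ for an involution $t\notin U$ — and such $t$ exists because otherwise $U\supseteq T\ni$ all involutions, but $1$ adjacent to all involutions contradicts independence unless there are no involutions, impossible for even order. Hence $1$ is matched, so $U\subseteq T\setminus\{1\}$, giving $u\leq|T|-1$ and $\mu=(|G|-u)/2\geq(|G|-|T|+1)/2=1+(|G|-|T|-1)/2$. Hmm, that is $1+(|G|-|T|-1)/2$, not $1+(|G|-|T|)/2$; since $|G|-|T|$ is even (as $|G|$ even and $|T|$ even) I'd want $u\leq|T|-2$. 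That extra gain of one more comes from pairing two involutions in $U$ when $|U|\geq 2$: if $|U|\geq 2$ it contains two involutions $s,t$; they may be non-adjacent in $P(G)$, but in $P_e(G)$ they are adjacent iff $\langle s,t\rangle$ is cyclic, which fails for distinct involutions — so this pairing is \emph{not} available, and the honest bound for $P(G)$ and $P_e(G)$ is $u\leq|T|-1$ when $1$ is matched. I therefore expect the statement's ``$1+(|G|-|T|)/2$'' to be read with the convention that this is the bound \emph{when $1$ together with one involution are matched as a pair}, i.e.\ $\mu\geq 1+(|G|-|T|)/2$ follows because after forming $\{1,t\}$ the remaining $|G|-|T|$ vertices outside $T$ are matched among themselves in pairs (all $|G\setminus T|/2$ of their inverse-pairs), contributing $(|G|-|T|)/2$ edges, plus the one edge $\{1,t\}$ — and the elements of $T\setminus\{1,t\}$, all involutions, are simply left unmatched. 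That is exactly the construction in the remark preceding the Corollary, so (a) is immediate once I verify $P(G), P_e(G)$ satisfy the Theorem's hypothesis and that $1\sim t$ for an involution $t$.

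For part (b), suppose $G$ has a unique involution $z$. Then $T=\{1,z\}$, so by part (a), $\mu(P(G))\geq 1+(|G|-2)/2=|G|/2$, and since $|G|/2$ is the maximum possible matching number of any graph on $|G|$ vertices, equality holds and the matching is perfect. I would spell this out by taking the matching constructed for (a): it pairs $1$ with $z$, pairs every other $g$ with $g^{-1}$ (valid since $o(g)\geq 3$ forces $g\neq g^{-1}$, and these pairs partition $G\setminus\{1,z\}$ because $g\mapsto g^{-1}$ is an involution on this set with no fixed points), and so saturates all of $G$. The main obstacle, as the discussion above shows, is pinning down the precise constant in part (a) — specifically convincing oneself that the elements of $T\setminus\{1\}$ genuinely cannot all be absorbed into the matching (they can't, since distinct involutions are non-adjacent in both graphs), so that the bound is ``$1+(|G|-|T|)/2$'' and no better in general; part (b) is then a one-line specialization with $|T|=2$.
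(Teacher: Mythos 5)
Your final argument---the explicit matching consisting of all inverse pairs $\{g,g^{-1}\}$ with $g\in G\setminus T$ (valid edges of $P(G)$, hence of $P_e(G)$) together with the single edge $\{1,t\}$ for an involution $t$, which has size $1+(|G|-|T|)/2$, specialised to $|T|=2$ for part (b)---is correct and is exactly the paper's proof. The long detour through maximum matchings with unmatched set $U\subseteq T$ (including the invalid step of adjoining $\{1,t\}$ for an involution $t\notin U$, which is already matched) is unnecessary and should simply be cut in favour of the direct construction you end with.
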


\begin{rk}
Groups with a unique involution are known; see~\cite{bc}.
\end{rk}

\section{Upper and lower bounds}

In this section, we describe upper and lower bounds for the matching number
of the power graph of a group of even order. First, an upper bound.

\begin{thm}
Let $G$ be a finite group of even order. Let $I(G)$ be the set of involutions
in $G$, and $O(G)$ the set of elements of odd order. Then
\begin{enumerate}
\item any matching of $P(G)$ leaves at least $|I(G)|-|O(G)|$ vertices unmatched;
\item if $G$ has a perfect matching, then $|I(G)|\le|O(G)|$.
\end{enumerate}
\label{t:mp1}
\end{thm}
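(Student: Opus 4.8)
The plan is to use the structure of the power graph restricted to the elements of even order, and to argue that each involution is "expensive" to match. The key observation is that in $P(G)$, an involution $t$ is adjacent only to elements $x$ with $t \in \langle x \rangle$, i.e. elements whose cyclic subgroup contains $t$; such an $x$ has even order, and in fact $\langle x \rangle$ contains a unique involution, namely $t$ itself (a cyclic group of even order $2^a m$ with $m$ odd has exactly one involution). So distinct involutions have disjoint neighbourhoods among the elements of even order, and moreover an involution is never adjacent to another involution (both would have to be a power of the other, impossible for distinct involutions). This means the involutions form an independent set, and the whole set $I(G)$ can only be matched into $G \setminus (I(G) \cup O(G))$ — the elements of even order $>2$.

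From here I would run a counting/deficiency argument. Partition $G$ as $\{1\} \cup O'(G) \cup I(G) \cup E(G)$, where $O'(G) = O(G) \setminus \{1\}$ and $E(G)$ is the set of elements of even order greater than $2$. In any matching $M$, each edge covers at most one vertex of $I(G)$ (since $I(G)$ is independent), and each such edge has its other endpoint in $E(G)$. Hence the number of matched involutions is at most $\min(|I(G)|, |E(G)|)$... but that alone is not tight enough; the sharper input is that I should instead bound the vertices \emph{not} available to absorb involutions. The cleanest route: apply the Berge–Tutte deficiency formula, or simply observe directly that the set $S = I(G)$ has neighbourhood $N(S) \subseteq E(G)$, so at least $|I(G)| - |E(G)|$ vertices of $I(G)$ are left unmatched by \emph{any} matching. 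This gives a bound of $|I(G)| - |E(G)|$, so I still need to relate $|E(G)|$ to $|O(G)|$; if instead $|E(G)| \le |O(G)| - 1 = |O'(G)|$ the claim would follow, but that inequality can fail. So the argument must be more careful and pair involutions against odd-order elements more globally, likely by showing the whole set $I(G) \cup E(G)$ together can be matched only using vertices within itself plus $O'(G) \cup \{1\}$ and then counting — essentially re-deriving the theorem via Hall-type deficiency on $I(G)$ alone but with the correct neighbour set. The honest statement to prove is just the stated one, so I expect the real argument is: consider a maximum matching; its unmatched set $U$ induces no edges; $I(G) \setminus U$ is matched, each via an edge to a distinct vertex of even order exceeding $2$ or ... and a careful count of how many vertices of even order there are versus odd order, using $|G|$ even, yields $|U \cap I(G)| \ge |I(G)| - |O(G)|$.

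For part (a) concretely, I would argue: take a maximum matching $M$ with unmatched set $U$; since $U$ is independent and every neighbour of an involution has even order $>2$, consider the map sending each matched involution to its partner. These partners are distinct and have even order $>2$; in particular they are not involutions and not of odd order. So the number of matched involutions is at most the number of non-identity elements that are neither involutions nor of odd order. Combining with the fact (to be extracted by a parity/counting argument on $|G|$) that this quantity is bounded appropriately will give $|I(G)| - |O(G)|$ unmatched involutions — hence at least that many unmatched vertices overall.

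Part (b) is then immediate: a perfect matching has no unmatched vertices, so $|I(G)| - |O(G)| \le 0$ by part (a), i.e. $|I(G)| \le |O(G)|$.

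The main obstacle will be pinning down exactly why the count produces the quantity $|O(G)|$ rather than $|E(G)|$ — i.e. establishing the inequality that converts "involutions must be matched into even-order elements of order $>2$" into the clean bound involving odd-order elements. I expect this to hinge on a global double-count (perhaps grouping $G$ by the unique involution in each cyclic subgroup, or by pairing $x \leftrightarrow x^{-1}$ and tracking which pairs can absorb an involution), and getting that bookkeeping exactly right is the crux; everything else is routine.
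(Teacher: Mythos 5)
There is a genuine gap: your argument never actually reaches the bound $|I(G)|-|O(G)|$, and you say so yourself ("getting that bookkeeping exactly right is the crux"). The Hall-type deficiency argument on the set $I(G)$ is aimed at the wrong set: since an involution can be adjacent to very many elements of even order, bounding unmatched involutions by $|I(G)|-|N(I(G))|\ge |I(G)|-|E(G)|$ is typically vacuous, and no amount of refinement of that particular inequality will produce $|O(G)|$ on the right-hand side. The missing idea is to apply the deficiency (Berge--Tutte) form of Tutte's theorem with the \emph{deleted} set taken to be $O(G)$, not with $I(G)$ as the set whose neighbourhood is examined. Concretely: for each involution $t$ set $C_t=\{x\in G: t\in\langle x\rangle\}$. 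These sets partition $G\setminus O(G)$ (every element of even order has a unique involution in its cyclic subgroup), each $C_t$ is a connected component of the subgraph induced on the even-order elements (if $x$ is a power of $y$ then the involution of $\langle x\rangle$ lies in $\langle y\rangle$, so an edge cannot leave $C_t$), and each $|C_t|$ is odd (pair the elements of $C_t\setminus\{t\}$ with their inverses). Thus deleting $O(G)$ leaves $|I(G)|$ odd components, and the deficiency version of Tutte's theorem gives at least $|I(G)|-|O(G)|$ unmatched vertices in any matching; part (b) follows as you said. Equivalently, one can count directly: matching edges inside a fixed $C_t$ cover at most $|C_t|-1$ of its vertices, so each of the $|I(G)|$ components supplies at least one vertex that must either be matched into $O(G)$ or left unmatched, and only $|O(G)|$ of them can be absorbed.

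You did gesture at the right structure ("grouping $G$ by the unique involution in each cyclic subgroup"), but the proposal stops short of the two facts that make it work --- that these classes are whole connected components of the even-order part, and that each has odd cardinality --- and without them the count you want cannot be completed.
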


\begin{proof}
Let $\Gamma$ be the induced subgraph of $P(G)$ on $G\setminus O(G)$ (the
set of elements of even order in $G$). For $t\in I(G)$, let
\[C_t=\{x\in G:t\in\langle x\rangle\}.\]
Note that elements of $C_t$ have even order, and no element of $G$ can lie in
more than one of these sets, since a cyclic group contains at most one
involution.

We will show that the sets $C_t$ for $t\in I(G)$ are connected components
of $\Gamma$, and that they all have odd cardinality. It follows from Tutte's
$1$-factor theorem~\cite{tutte} that, if $P(G)$ has a perfect matching, then
$|I(G)|$ (the number of odd components of the induced subgraph on
$G\setminus O(G)$) does not exceed $|O(G)|$. Moreover, the deficit form of
the theorem shows that, if $|I(G)|>|O(G)|$, there are at least $|I(G)|-|O(G)|$
vertices uncovered in any matching.

Note that any element of $C_t$ is joined to $t$ in the
power graph, so any two elements of $C_t$ have distance at most~$2$; thus
$C_t$ is contained in a connected component. Take an edge $\{x,y\}$ of the
power graph contained in $G\setminus O(G)$. Without loss of generality, $x$
is a power of $y$. Suppose that $t$ is the involution in $\langle x\rangle$,
so that $x\in C_t$. Then $t\in\langle x\rangle\le\langle y\rangle$, so also
$y\in C_t$. This shows that $C_t$ is a connected component of $\Gamma$.

Now all elements of $C_t\setminus\{t\}$ have order greater than $2$; so they
can be paired with their inverses, leaving only $t$ unpaired. So $|C_t|$ is
odd, as required.
\end{proof}

Now we give a lower bound.

\begin{thm}
Let $G$ be a finite group of even order. Let $S=I(G)$ be the set of involutions
in $G$, and $O(C_G(S))$ the set of elements of odd order which commute with all
involutions.
\begin{enumerate}
\item There is a matching leaving at most $\max\{0,|I(G)|-|O(C_G(S))|\}$
vertices unmatched.
\item If $|I(G)|\le|O(C_G(S))|$, then $P(G)$ has a perfect matching.
\end{enumerate}
\label{t:mp2}
\end{thm}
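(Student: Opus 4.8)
The plan is to prove the statement by exhibiting, for every $G$ of even order, a matching of $P(G)$ that leaves at most $\max\{0,|I(G)|-|O(C_G(S))|\}$ vertices uncovered; part (b) is then just the case where this number is zero. Write $k=|I(G)|$ and $r=|O(C_G(S))|$. From the proof of Theorem~\ref{t:mp1} recall the partition $G=O(G)\cup\bigcup_{t\in I(G)}C_t$ with $C_t=\{x\in G:t\in\langle x\rangle\}$; each $C_t$ is closed under inversion, consists of elements of even order, and has odd cardinality, since $t$ is the unique self-inverse element it contains. Similarly $O(G)$ and $O(C_G(S))$ are closed under inversion and contain $1$, so $|O(G)|$ and $r$ are odd, and then comparing parities in the partition forces $k$ to be odd too. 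The one computational input used throughout is the following: if $u$ has odd order and commutes with an involution $t$, then $\langle tu\rangle=\langle t\rangle\times\langle u\rangle$ is cyclic of order $2\,o(u)$ with $t=(tu)^{o(u)}$ and $u=(tu)^{o(u)+1}$; hence in $P(G)$ the vertex $tu$ is adjacent to each of $t,u,u^{-1}$, and symmetrically $tu^{-1}$ is adjacent to each of $t,u,u^{-1}$.

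I will assemble the matching from three kinds of pieces. First, fix an involution $t_1$ and take the edge $\{1,t_1\}$ (an edge since $1\in\langle t_1\rangle$): this absorbs the ``surplus'' identity of $O(G)$ against the ``deficient'' involution of $C_{t_1}$. Secondly, list the inverse pairs of $O(C_G(S))\setminus\{1\}$ as $\{u_1,u_1^{-1}\},\dots,\{u_p,u_p^{-1}\}$, where $r-1=2p$, put $m=\min\{p,(k-1)/2\}$, and for each $j\le m$ form a gadget on the two involutions $t_{2j},t_{2j+1}$ and the inverse pair $\{u_j,u_j^{-1}\}$ consisting of the edges
\[
\{t_{2j},\,t_{2j}u_j\},\qquad \{u_j^{-1},\,t_{2j}u_j^{-1}\},\qquad \{t_{2j+1},\,t_{2j+1}u_j\},\qquad \{u_j,\,t_{2j+1}u_j^{-1}\}.
\]
All four are edges of $P(G)$ by the computational input above, since $u_j$ commutes with every involution. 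The gadget saturates the eight (pairwise distinct) vertices $t_{2j},t_{2j+1},u_j^{\pm1},t_{2j}u_j^{\pm1},t_{2j+1}u_j^{\pm1}$; it meets $C_{t_{2j}}$ exactly in $\{t_{2j},t_{2j}u_j,t_{2j}u_j^{-1}\}$, meets $C_{t_{2j+1}}$ in the analogous triple, and meets $O(G)$ exactly in $\{u_j,u_j^{-1}\}$. Thirdly, on each of the remaining sets — $C_{t_1}\setminus\{t_1\}$; $C_{t_{2j}}$ and $C_{t_{2j+1}}$ minus their gadget triples for $j\le m$; $C_{t_i}\setminus\{t_i\}$ for $i>2m+1$; and $O(G)\setminus(\{1\}\cup\bigcup_{j\le m}\{u_j,u_j^{-1}\})$ — which are all closed under inversion, of even cardinality, and free of involutions, take the matching by inverse pairs $x\leftrightarrow x^{-1}$.

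Two verifications then finish the proof. Validity and disjointness: the gadget edges are handled above, and $x$ is a power of $x^{-1}$ for every $x$ of order $>2$, so all pieces consist of genuine edges of $P(G)$; the pieces are pairwise vertex-disjoint because the $C_t$ are mutually disjoint, each gadget lives inside its own two $C_t$'s together with its own inverse pair of $O(G)$, and the inverse-pair matchings cover precisely the vertices not yet used. Here one also needs the eight gadget vertices to be distinct, which follows from $t_{2j}\ne t_{2j+1}$, from $u_j\ne u_j^{-1}$ (as $o(u_j)$ is odd and exceeds $1$), and from the even/odd-order dichotomy. Counting: the uncovered vertices are exactly the involutions $t_i$ with $i>2m+1$, so there are $k-(2m+1)$ of them; when $k\le r$ one has $m=(k-1)/2$, giving $0$, and when $k>r$ one has $m=p=(r-1)/2$, giving $k-r$ — that is, $\max\{0,k-r\}$ in both cases. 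The only place where any care is really needed is the gadget: checking that the four displayed pairs are edges, and that the gadget touches $C_{t_{2j}}$, $C_{t_{2j+1}}$, and $O(G)$ in exactly the claimed triples and pair, so that the leftover inverse-pair matchings attach without conflict.
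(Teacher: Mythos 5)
Your proof is correct and follows essentially the same route as the paper's: match the identity to one involution, pair the remaining elements with their inverses, and for each pair of leftover involutions $t,t'$ spend one inverse pair $\{u,u^{-1}\}$ from $O(C_G(S))$ on the same four power-graph edges $\{t,tu\},\{u^{-1},tu^{-1}\},\{t',t'u\},\{u,t'u^{-1}\}$ that the paper obtains by its local swap. The only differences are presentational: you assemble the matching directly from the partition $G=O(G)\cup\bigcup_{t\in I(G)}C_t$ rather than modifying an initial inverse-pair matching, and you make explicit the parity facts (that $|I(G)|$ and $|O(C_G(S))|$ are odd) and the distinctness and disjointness checks that the paper leaves implicit.
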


\begin{proof}
Let $n=|I(G)|$ and $m=|O(C_G(S))$. Suppose first that $m\ge n$.
Suppose first that $m\ge n$.
We start as usual with the matching $M$ on $G$ in which each element of order
greater than $2$ is matched to its inverse, leaving the identity and the
involutions unmatched. In addition, we match the identity to one of the
involutions. This leaves $n-1$ unmatched involutions, and $m-1$ elements of
odd order commuting with them, falling into $(m-1)/2$ inverse pairs. So we
can partition the unmatched involutions into $(n-1)/2$ pairs, and choose an
inverse pair of elements of odd order commuting with each pair of involutions.

Let $u,v$ be involutions, and $x,x^{-1}$ the corresponding pair of elements
of odd order commuting with $u$ and $v$. In the given matching, we have
edges $\{x,x^{-1}\}$, $\{ux,ux^{-1}\}$, and $\{vx,vx^{-1}\}$. We delete these
and include instead the edges $\{u,ux\}$, $\{v,vx^{-1}\}$,
$\{ux^{-1},x^{-1}\}$ and $\{vx,x\}$. Now all previously matched elements are
still matched, and in addition $u$ and $v$ are matched.

Repeating for all pairs of involutions we obtain a perfect matching.

\medskip

Now suppose that $m<n$. Proceeding as above, we can match $(m-1)/2$ pairs
of involutions with elements of odd order, leaving $n-m$ involutions
unmatched, as required.
\end{proof}

With these results we can calculate the matching number of the power graph of
a nilpotent group.

\begin{thm}
Let $G$ be nilpotent; let $I(G)$ and $O(G)$ be the sets of involutions and
elements of odd order respectively.
\begin{enumerate}
\item If $|I(G)|<|O(G)|$, then $G$ has a perfect matching.
\item Otherwise, a maximum matching leaves $|I(G)|-|O(G)|$ vertices unmatched.
\label{t:nilp}
\end{enumerate}
\end{thm}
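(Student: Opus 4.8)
The plan is to obtain this as a direct corollary of Theorems~\ref{t:mp1} and~\ref{t:mp2}: the upper bound of the former and the lower bound of the latter will meet once we check that, for a nilpotent group, the set $O(C_G(S))$ appearing in the lower bound is all of $O(G)$. Note first that we may assume $G$ has even order: a group of odd order has no involutions, and that case was already treated at the start of Section~2.

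The first thing I would establish is the relevant structural fact. A finite nilpotent group is the internal direct product of its Sylow subgroups, so $G = P \times H$ with $P$ the (nontrivial) Sylow $2$-subgroup and $H$ the product of the Sylow subgroups at odd primes. Since $(p,h)$ has order $\lcm(o(p),o(h))$, every involution of $G$ lies in $P \times \{1\}$, and the set of odd-order elements is exactly $O(G) = \{1\} \times H$, which is therefore a subgroup. As elements of $P \times \{1\}$ commute with elements of $\{1\} \times H$, every involution commutes with every element of odd order; hence, writing $S = I(G)$, we get $O(G) \subseteq C_G(S)$, and since the reverse containment of odd-order elements is automatic, $O(C_G(S)) = O(G)$.

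With that in hand I would simply combine the two bounds. Theorem~\ref{t:mp1}(a) says that every matching of $P(G)$ leaves at least $|I(G)| - |O(G)|$ vertices unmatched. Theorem~\ref{t:mp2}(a), using $O(C_G(S)) = O(G)$, produces a matching leaving at most $\max\{0,\,|I(G)| - |O(G)|\}$ vertices unmatched, and Theorem~\ref{t:mp2}(b) gives a perfect matching as soon as $|I(G)| \le |O(G)|$. So: if $|I(G)| < |O(G)|$, then $P(G)$ has a perfect matching, which is part~(a); and if $|I(G)| \ge |O(G)|$, then the lower bound ``at least $|I(G)|-|O(G)|$'' and the upper bound ``at most $|I(G)|-|O(G)|$'' coincide, forcing a maximum matching to leave exactly $|I(G)| - |O(G)|$ vertices unmatched, which is part~(b).

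The only genuine step is the structural observation $O(C_G(S)) = O(G)$; I do not expect any real obstacle, since everything else is bookkeeping with the two previous theorems. The one point to watch is the boundary case $|I(G)| = |O(G)|$: it is not covered by the strict inequality in~(a), but the ``otherwise'' clause in~(b) then predicts zero unmatched vertices, which is consistent, because Theorem~\ref{t:mp2}(b) in fact yields a perfect matching whenever $|I(G)| \le |O(C_G(S))|$.
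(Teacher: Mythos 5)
Your proposal is correct and follows essentially the same route as the paper: use the nilpotent decomposition $G \cong H \times O(G)$ with $H$ a Sylow $2$-subgroup to see that all involutions commute with all odd-order elements (so $O(C_G(S)) = O(G)$), and then combine the lower bound of Theorem~\ref{t:mp1} with the upper bound of Theorem~\ref{t:mp2}. Your explicit treatment of the boundary case $|I(G)| = |O(G)|$ is a nice touch that the paper leaves implicit, but it is the same argument.
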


\begin{proof}
If $G$ is nilpotent, then the elements of odd order form a normal subgroup
$O(G)$, and $G\cong H\times O(G)$ where $H$ is a Sylow $2$-subgroup. So all
involutions commute with all elements of odd order. So the result follows
from Theorems~\ref{t:mp1} and~\ref{t:mp2}.
\end{proof}

\section{Related results}

In this section we give some miscellanous related results.

\subsection{Groups whose power graph has small matching number}

\begin{thm}
For every positive integer $m$, there are only finitely many finite groups
$G$ with $\mu(G)=m$, apart from elementary abelian $2$-groups
(with $\mu(G)=1$); such a group satisfies $|G|<8m+4$.
\end{thm}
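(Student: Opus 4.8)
The strategy is to bound $|G|$ in terms of $\mu(G)=m$ by controlling the two quantities that drive all the earlier bounds: the number of involutions $|I(G)|$ and the number of elements of odd order $|O(G)|$. First I would dispose of the case where $G$ has no elements of odd order other than the identity; then $G$ is a $2$-group all of whose nonidentity elements are involutions, i.e. an elementary abelian $2$-group, which is the stated exception with $\mu(G)=1$ (pair any involution with the identity; no further edges exist since the power graph is a star plus isolated... actually it is a disjoint union giving matching number $1$). So from now on assume $O(G)\ne\{1\}$, hence $|O(G)|\ge 3$ (the smallest odd order $>1$ contributes an inverse pair), and in particular $G$ has an element of odd prime order.

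\medskip

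\emph{Step 1: bound the matching number from below by a function of $|G|$.} By Theorem~\ref{t:mp2}(a) there is a matching leaving at most $\max\{0,|I(G)|-|O(C_G(S))|\}$ vertices unmatched, so
\[
\mu(G)\ \ge\ \frac{|G|-\max\{0,|I(G)|-|O(C_G(S))|\}}{2}\ \ge\ \frac{|G|-|I(G)|}{2}.
\]
Thus $|G|\le 2m+|I(G)|$, and it suffices to bound $|I(G)|$ in terms of $m$.

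\medskip

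\emph{Step 2: bound $|I(G)|$.} This is the crux. Fix an element $z$ of odd prime order $p$, generating a cyclic subgroup $Z=\langle z\rangle$ of size $p\ge 3$, and consider its conjugates. Alternatively — and this is the cleaner route — work inside the power graph directly: for each involution $t$, the set $C_t=\{x\in G: t\in\langle x\rangle\}$ from the proof of Theorem~\ref{t:mp1} is a clique-like component, and these $|I(G)|$ sets are pairwise disjoint, each contributing at least the vertex $t$ itself. To get matched edges, I would instead observe that each element $x$ of even order $2k$ with $k>1$ lies in some $C_t$ and the edge $\{x,x^{-1}\}$ sits inside it; and crucially any element $y$ of odd order $>1$ together with a commuting involution $u$ yields elements $uy$ of even order, enlarging some $C_u$. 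The key numerical point: if $|I(G)|$ is large, the $C_t$'s either are large (each contributing roughly $|C_t|/2$ to a matching, summing to something on the order of $|G|/2$, already forcing $m$ large) or are all small, in which case $|C_t|$ odd and small forces $|C_t|=1$, i.e. $t\notin\langle x\rangle$ for any $x\ne t$, meaning $t$ is not a square and lies in no cyclic subgroup of order $4$ or $2p$. I would then argue that having many such "isolated" involutions forces the $2$-structure to be elementary abelian on a large subgroup, and combine this with $O(G)\ne\{1\}$: an element of odd order $>1$ commuting with two independent involutions $t_1,t_2$ (which must happen if $|I(G)|$ is large, by a counting/centralizer argument) merges $t_1,t_2$ into one non-singleton component, so "most" involutions cannot be isolated — pinning $|I(G)|\le c\cdot m$ for an explicit small $c$.

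\medskip

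\emph{Step 3: assemble the explicit bound.} Putting the pieces together I expect to land on $|I(G)|<6m+4$ or similar, whence $|G|\le 2m+|I(G)|<8m+4$, and then finiteness is immediate since there are finitely many groups of bounded order. \textbf{The main obstacle} is Step 2: ruling out the configuration of many mutually non-commuting involutions with a tiny odd part without it collapsing to an elementary abelian $2$-group. I would handle it by exploiting that $O(G)\ne\{1\}$ gives an element $y$ of odd prime order whose centralizer $C_G(y)$ has index coprime... no — whose conjugacy class has odd size issues — more robustly, by using that in the power graph $\langle y\rangle$ is a clique of size $\ge 3$, and any involution centralizing $y$ produces a $C_6$-type cyclic subgroup forcing additional edges; a pigeonhole on how many involutions can fail to centralize \emph{any} conjugate of $y$ then gives the bound. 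The constant $8m+4$ should drop out of tracking the worst case $|G|=2m + |I(G)|$ with $|I(G)|$ at its maximum.
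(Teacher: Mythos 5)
Your Step 1 is sound and is exactly how the paper begins in the even-order case: Theorem~\ref{t:mp2} plus $|O(C_G(S))|\ge1$ gives $|G|-2m\le |I(G)|-1$, i.e.\ $|G|\le 2m+|I(G)|-1$. The trouble is everything after that. First, your opening reduction is wrong: if $G$ has no nontrivial elements of odd order, $G$ is a $2$-group, but by no means an elementary abelian one ($C_{2^n}$, $D_8$, $Q_8$, $C_4\times C_2^k$ all have $O(G)=\{1\}$). These groups are then covered by neither branch of your argument, since your Step 2 explicitly requires an element of odd prime order. Second, and more seriously, Step 2 --- which you yourself flag as the main obstacle --- is not a proof but a hope, and its central claims do not hold. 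The assertion that a large number of involutions forces some element of odd order $>1$ to commute with two involutions is false in general: in $A_5$ the centralizer of every involution is a Klein four group, and in the Suzuki groups $Sz(q)$ the centralizer of every involution is a $2$-group, so in these groups no nontrivial odd-order element commutes with even a single involution, although $|I(G)|$ is large. Hence the proposed ``merging of components'' argument cannot pin down $|I(G)|\le c\cdot m$ along the route you describe, and no bound of the form $|G|<8m+4$ emerges from the sketch.

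The idea you are missing is much simpler and is the whole content of the paper's proof: a classical elementary fact says that if $G$ is \emph{not} an elementary abelian $2$-group, then $|I(G)|<\frac{3}{4}|G|$. You do not need to bound $|I(G)|$ by a multiple of $m$ at all; you bound it by a fraction of $|G|$. Combining $|I(G)|\ge |G|-2m+1$ (your Step 1, which is the paper's inequality) with $|I(G)|<\frac{3}{4}|G|$ gives $|G|-2m+1<\frac{3}{4}|G|$, hence $|G|<8m+4$ immediately, and finiteness follows; the odd-order case is handled directly by $\mu(G)=(|G|-1)/2$, and the elementary abelian $2$-groups are exactly the stated exception with $\mu=1$. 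If you want to salvage your write-up, replace all of Step 2 (and the faulty initial dichotomy) by this $\frac{3}{4}$ bound on the number of involutions, which also disposes of the $2$-group cases your sketch leaves uncovered.
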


\begin{proof}
If $|G|$ is odd, then $m=\mu(G)=(|G|-1)/2$, so $|G|=2m+1$. So suppose that
$|G|$ is even. Then $|O(C_G(S))|\ge1$, and the number of vertices uncovered
in a maximum matching is $|G|-2m$. So
\[|I(G)|-1\ge|G|-2m,\]
whence $|I(G)\ge|G|-2m+1$. However, if $G$ is not elementary abelian, then
$|I(G)|<\frac{3}{4}|G|$ (This result is described in the literature as an
``easy exercise''). So $|G|<8m+4$.
\end{proof}

Using this, we can give the determination of groups whose power graph has
matching number $1$ or $2$. The \emph{dihedral group} $D_n$ is the group of
order $2n$ which is the symmetry group of a regular $n$-gon, for $n\ge3$.

\begin{thm}
Let $G$ be a finite group.
\begin{enumerate}
\item If $\mu(P(G))=1$, then $G$ is either an elementary abelian
$2$-group or $C_3$.
\item If $\mu(P(G))=2$, then $G$ is one of the following groups: $C_4$, $C_5$,
$D_3$ or $D_4$.
\end{enumerate}
\end{thm}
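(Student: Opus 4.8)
For part (a) I would argue from the structure of graphs with small matching number. If $\mu(P(G))=1$ then $P(G)$ has an edge but no two independent edges, so, being connected, $P(G)$ is a triangle or a star. Since the identity of $G$ is a power of every element, it is adjacent in $P(G)$ to all other vertices; hence if $P(G)$ is a star it is the star centred at the identity (the case $|G|=2$ giving $C_2$), and then no two non-identity elements are adjacent, which forces $x^2=1$ for every $x\in G$, so $G$ is an elementary abelian $2$-group. If $P(G)$ is a triangle then $|G|=3$, so $G\cong C_3$. Conversely $P(C_2^n)=K_{1,2^n-1}$ and $P(C_3)=K_3$ have matching number $1$, which settles (a).

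For part (b) I would first sharpen the bound of the preceding theorem. Suppose $G$ has even order, is not an elementary abelian $2$-group, and $P(G)$ has no perfect matching. Writing $m=\mu(P(G))\ge1$, a maximum matching leaves $|G|-2m\ge1$ vertices uncovered, and by Theorem~\ref{t:mp2} this number is at most $|I(G)|-|O(C_G(I(G)))|\le|I(G)|-1$; combined with the (``easy exercise'') bound $|I(G)|<\frac{3}{4}|G|$ this gives $|G|<8m-4$. For $m=2$ we get $|G|<12$. So, assuming $\mu(P(G))=2$: if $|G|$ is odd then $(|G|-1)/2=2$, so $G\cong C_5$; if $P(G)$ has a perfect matching then $|G|=2m=4$, and of the two groups of order $4$ only $C_4$ qualifies, since $P(C_4)=K_4$ has matching number $2$ while $P(C_2^2)=K_{1,3}$ has matching number $1$; otherwise $|G|\in\{6,8,10\}$.

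It then remains to run through the groups of order $6$, $8$ and $10$. The cyclic groups $C_6$, $C_8$, $C_{10}$ and the quaternion group $Q_8$ each have a unique involution, so $P(G)$ has a perfect matching and $\mu=|G|/2\ge3$; the group $C_4\times C_2$ is nilpotent with $|I(G)|=3>1=|O(G)|$, so Theorem~\ref{t:nilp} gives $\mu(P(C_4\times C_2))=(8-2)/2=3$; and $C_2^3$ is elementary abelian. This leaves $D_3$, $D_4$ and $D_5$. For $D_4$, which is nilpotent with $|I(G)|=5$ and $|O(G)|=1$, Theorem~\ref{t:nilp} gives $\mu(P(D_4))=(8-4)/2=2$.

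The only place that needs a hands-on computation, and hence the main obstacle, is the two non-nilpotent dihedral groups $D_3$ and $D_5$: Theorems~\ref{t:mp1} and~\ref{t:mp2} only confine $\mu(P(D_3))$ to $\{2,3\}$ and $\mu(P(D_5))$ to $\{3,4,5\}$. Here I would observe that in $D_n$ each of the $n$ reflections generates a subgroup of order $2$ containing no other group element, so in $P(D_n)$ every reflection is adjacent only to the identity; hence any matching saturates at most one reflection, leaving at least $n-1$ reflections uncovered. For $D_3$ this forces $\mu\le(6-2)/2=2$, and matching the identity to a transposition together with an element of order $3$ to its inverse attains it, so $\mu(P(D_3))=2$; for $D_5$ it forces $\mu\le(10-4)/2=3$, so $D_5$ is excluded. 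Assembling the cases, $\mu(P(G))=2$ holds precisely for $G\in\{C_4,C_5,D_3,D_4\}$, as claimed.
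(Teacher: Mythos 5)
Your proposal is correct, and it fills in details the paper leaves implicit. For part (a) you argue directly from the structure of graphs with matching number one (a triangle or a star, the star necessarily centred at the identity since the identity is joined to everything), whereas the paper deduces (a) from the finiteness theorem of the previous subsection; your route is more elementary and self-contained. For part (b) you follow the same strategy as the paper --- bound $|G|$ and then check the few remaining groups --- but with two useful additions: you redo the computation in the preceding theorem and obtain $|G|<8m-4$ (the paper's statement says $8m+4$, though its proof of part (b) in effect uses the sharper bound, ``order at most $11$''), and you actually carry out the group-by-group check that the paper dismisses with ``only a small number of groups to analyse'', using Theorem~\ref{t:nilp} for $C_4\times C_2$ and $D_4$, the unique-involution criterion for $C_6$, $C_8$, $C_{10}$, $Q_8$, and the observation that reflections in $D_n$ are adjacent only to the identity to handle $D_3$ and $D_5$. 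One small wording slip: the exclusion of $D_5$ does not follow from the upper bound $\mu\le 3$ that the reflection argument yields, but from the lower bound $\mu\ge 3$ supplied by Theorem~\ref{t:mp2}, which you had already recorded when confining $\mu(P(D_5))$ to $\{3,4,5\}$; as written the final clause cites the wrong half of the estimate, though the needed fact is present in your argument.
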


\begin{proof}
Part (a) follows immediately from the preceding theorem. For part (b), we know
that such a group has order at most~$11$, and there are only a small number of
groups to analyse.
\end{proof}

\subsection{Groups with few involutions}

We have seen that, if $G$ has a unique involution, then $P(G)$ has a perfect
matching. We now extend this result.

\begin{thm}
Let $G$ be a group with exactly three involutions, not all pairs of which
commute. Then either $G\cong S_3$, or $P(G)$ has a perfect matching.
\end{thm}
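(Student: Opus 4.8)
The plan is to pin down the subgroup generated by the three involutions, then exploit the conjugation action of $G$ on $I(G)$ together with the lower bound of Theorem~\ref{t:mp2}. By hypothesis some pair of involutions fails to commute, say $t_1,t_2$; then $\langle t_1,t_2\rangle$ is a dihedral group of order $2n$ with $n=o(t_1t_2)\ge 3$. Since the dihedral group $D_n$ contains exactly $n$ involutions when $n$ is odd and $n+1$ when $n$ is even, and $G$ has only three involutions in all, the only surviving possibility is $n=3$, so $\langle t_1,t_2\rangle\cong S_3$. The three involutions of this subgroup are then involutions of $G$, hence are all of them, so $I(G)\subseteq\langle t_1,t_2\rangle$; writing $H=\langle I(G)\rangle$ we get $H=\langle t_1,t_2\rangle\cong S_3$.

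Next I would consider the homomorphism $\phi\colon G\to\mathrm{Sym}(I(G))\cong S_3$ given by conjugation on the three involutions. Its kernel $K$ consists of the elements that centralise every involution, and since $t_1,t_2$ generate $H$ this is exactly $C_G(H)$. Now $K$ contains no involution: any involution in $K$ would lie in $I(G)\subseteq H$, hence in $K\cap H=Z(H)=1$. By Cauchy's theorem $K$ therefore has odd order, so in the notation of Theorem~\ref{t:mp2} (with $S=I(G)$) one has $O(C_G(S))=C_G(S)=K$.

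Finally I would split into cases according to $K$. If $K\neq 1$, then $|O(C_G(S))|=|K|$ is an odd integer at least $3=|I(G)|$, so Theorem~\ref{t:mp2}(b) produces a perfect matching of $P(G)$. If $K=1$, then $\phi$ is injective, so $G$ embeds in $S_3$; but $G$ also contains $H\cong S_3$, so comparing orders forces $|G|=6$ and $G=H\cong S_3$. The only step that requires real thought is the involution count that forces $\langle t_1,t_2\rangle\cong S_3$, together with the observation that $C_G(H)$ has odd order; once those are in hand, the remainder is simply feeding the inequality $|I(G)|\le|O(C_G(S))|$ into the already established lower bound, so I do not anticipate a serious obstacle.
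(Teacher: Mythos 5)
Your proof is correct, and after the shared opening step it takes a genuinely different route from the paper. Both arguments begin the same way: a non-commuting pair of involutions generates a dihedral group, the involution count forces this to be $S_3$, and all three involutions of $G$ lie in $H=\langle t_1,t_2\rangle\cong S_3$. From there the paper goes structural: since the involutions generate $H$, $H$ is normal, and because $S_3$ is a complete group (trivial centre and trivial outer automorphism group) the extension splits, giving $G\cong S_3\times K$ with $K$ of odd order; then $K=O(C_G(S))$ and Theorem~\ref{t:mp2} finishes. You instead avoid the splitting machinery entirely: you look at the conjugation action of $G$ on the three involutions, identify its kernel $K=C_G(S)=C_G(H)$, and observe that $K$ contains no involution because such an involution would lie in $K\cap H=Z(H)=1$, so $K$ has odd order by Cauchy. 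Your case split is then clean: if $K\neq1$ its order is an odd number at least $3=|I(G)|$, so Theorem~\ref{t:mp2} gives a perfect matching, while if $K=1$ the action is faithful, $|G|\le|\mathrm{Sym}(I(G))|=6$, and $G=H\cong S_3$. What each approach buys: the paper's argument yields the stronger structural fact $G\cong S_3\times K$ with $K$ odd (at the cost of citing the complete-group/splitting theorem from Robinson), whereas yours is more elementary and self-contained, extracting only the odd-order centralizer, which is all that Theorem~\ref{t:mp2} actually requires; it is in the same spirit as the paper's later proposition bounding $C_G(S)$ via the conjugation action on the involutions.
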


\begin{proof}
 Let $s,t,u$ be the involutions. If $s$ and $t$ do not
commute, then $\langle s,t\rangle$ is a dihedral group of order $2n$ containing
$n$ involutions, with $n\ge 3$; so we must have $n=3$, and $s,t,u$ are the
involutions in a normal subgroup of $G$ isomorphic to $S_3$.
Now $S_3$ is a complete group: this means that its centre and
its outer automorphism group are both trivial. Hence every extension of $S_3$
splits: that is, if $S_3$ is a normal subgroup of $G$, then
$G\cong S_3\times H$. See \cite[Section 13.5]{robinson}. Now $H$ contains no
involutions, so has odd order. If $|H|=1$, then $G\cong S_3$; otherwise
$H=O(C_G(S))$, and the result follows from Theorem~\ref{t:mp2}.
\end{proof}

For a group $G$, since $E(P(G))\subseteq E(P_e(G))\subseteq E(\Com(G))$, the possibility to have a perfect matching in the commuting graph is greater as compared to the power graph. The following theorem shows that, if the order of a group is much bigger than the number of involutions in it, then its commuting graph has a perfect matching.

\begin{prop}
There is a function $F$ such that, if $G$ is a group of even order which
has exactly $n$ involutions, and $|G|\ge F(n)$, then the commuting graph
of $G$ has a perfect matching.
\end{prop}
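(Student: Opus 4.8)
The plan is to imitate the strategy of Theorem~\ref{t:mp2}, but to exploit the far greater flexibility of the commuting graph over the power graph: instead of needing elements of odd order that generate a cyclic group containing a given involution, we only need elements that commute with a pair of involutions. The key structural input will be a bound, depending only on $n$, on the order of $\langle I(G)\rangle$, the subgroup generated by the involutions. Since $G$ has exactly $n$ involutions, the group $H=\langle I(G)\rangle$ is generated by $n$ elements of order $2$; such groups need not be finite in general, but here $H$ is a subgroup of the finite group $G$, and moreover every involution of $H$ is one of our $n$ involutions. I would first argue that $H$ is finite of bounded order: a $2$-generated such group is dihedral (or Klein/cyclic of order $\le 2$), and inductively, controlling the number of involutions controls the order. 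More carefully, one shows $|H|\le G_0(n)$ for some function $G_0$, since a finite group generated by $n$ involutions and containing exactly $n$ involutions has order bounded in terms of $n$ (for instance, $H$ acts faithfully on its set of involutions by conjugation together with the regular-type constraints, or one invokes the classification-free fact that such $H$ embeds in a bounded symmetric group).

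Next I would locate a large supply of odd-order elements commuting with all involutions. Let $C=C_G(H)$; then $[G:C]\le |{\rm Aut}(H)|\le G_1(n)$, so $|C|\ge |G|/G_1(n)$. Every odd-order element of $C$ commutes with all involutions of $G$ (those involutions all lie in $H$). The number of even-order elements of $C$ is at most... hmm, I need this to be small; better: the $2$-part of $C$ contributes, but $C$ could have large Sylow $2$-subgroup. Instead I would pass to $O(C)$, the set of odd-order elements of $C$; in a group, $|C|\le |O(C)|\cdot|P|$ where $P$ is a Sylow $2$-subgroup of $C$ only when things split, which is not guaranteed. So I would argue differently: the involutions of $C$ lie among our $n$ involutions, so $C$ has at most $n$ involutions, hence (again by the bounded-order fact, or by a theorem bounding a group's order by a function of its number of involutions when the $2$-part is restricted) the Sylow $2$-subgroup of $C$ has order bounded by $G_2(n)$; therefore $|O(C)|\ge |C|/G_2(n)\ge |G|/(G_1(n)G_2(n))$. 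Actually the cleanest route: a finite group with exactly $k$ involutions has a Sylow $2$-subgroup with exactly $k$ involutions, and a $2$-group with $k$ involutions has order at most $G_3(k)$ (elementary abelian of rank $r$ has $2^r-1$ involutions, and among $2$-groups of a given order the elementary abelian one has the most, by the ``easy exercise'' cited earlier). So $|O(C)|\ge |G|/(G_1(n)\,G_3(n))=:|G|/G_4(n)$.

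Now I would run the augmentation argument of Theorem~\ref{t:mp2} verbatim, but inside $C$: start from the matching of $G$ pairing each element of order $>2$ with its inverse (so the unmatched set is $I(G)\cup\{1\}$), match $1$ to one involution, pair up the remaining $n-1$ involutions into $(n-1)/2$ pairs, and to each pair $\{u,v\}$ assign a distinct inverse pair $\{x,x^{-1}\}\subseteq O(C)$; the local swap $\{x,x^{-1}\},\{ux,ux^{-1}\},\{vx,vx^{-1}\}\leadsto\{u,ux\},\{v,vx^{-1}\},\{ux^{-1},x^{-1}\},\{vx,x\}$ uses only commuting-graph edges and saturates $u$ and $v$. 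This needs $|O(C)|\ge n-1$, i.e. $|G|\ge (n-1)G_4(n)$. Setting $F(n)=(n-1)G_4(n)$ (and $F(n)=1$ for $n\le 1$, where a unique involution already gives a perfect matching by the Corollary), the proposition follows.

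The main obstacle is getting an honest, citable bound on the order of a finite group that has exactly $n$ involutions once its Sylow $2$-subgroup is pinned down — equivalently, bounding $|H|$ and the exponent of the outer action — without appealing to deep classification results. The $2$-group step is elementary; the step that $H=\langle I(G)\rangle$ has bounded order given $n$ involutions is the delicate one, since a priori a group generated by $n$ involutions can be infinite, and I must use finiteness of $G$ together with the hypothesis that $H$ introduces no new involutions. I expect this to reduce to showing that $H$, being generated by $n$ involutions all of which are conjugate-into-a-bounded-set, embeds into $\mathrm{Sym}(m)$ for $m$ bounded by a function of $n$; making that precise, or instead extracting an explicit $F$, is where the real work lies, and for the purposes of this proposition the mere existence of $F$ suffices.
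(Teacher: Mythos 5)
There is a genuine flaw, and it is not only the gap you flag yourself. The step ``a $2$-group with $k$ involutions has order at most $G_3(k)$'' is false: cyclic and generalized quaternion $2$-groups have exactly one involution and unbounded order, so your lower bound $|O(C)|\ge |C|/G_3(n)$ collapses. Worse, the defect is not repairable within your strategy, because insisting on \emph{odd-order} elements commuting with the involutions is itself the wrong move: there are arbitrarily large groups of even order with a fixed number of involutions whose only odd-order element is the identity (for instance $C_{2^k}\times C_2\times C_2$, which has exactly $7$ involutions), so no function $F(n)$ can guarantee the $n-1$ odd-order elements your augmentation step requires --- even though the commuting graph of such a group is complete and trivially has a perfect matching. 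So even if you filled in the ``real work'' you defer (bounding $|\langle I(G)\rangle|$, which incidentally is provable via $H/Z(H)\hookrightarrow\mathrm{Sym}(S)$ plus Schur's theorem, not by embedding $H$ itself in a bounded symmetric group), the plan as designed still could not prove the proposition.

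The point you missed is that in the commuting graph odd order is irrelevant, and this makes the whole first half of your argument unnecessary as well. The paper takes $F(n)=2n\cdot n!$ and argues directly: $G$ acts by conjugation on the $n$-element set $S$ of involutions, so the pointwise centralizer $C_G(S)$ (the kernel of this action) has index at most $n!$, hence order at least $2n$; thus $X=C_G(S)\setminus(\{1\}\cup S)$ contains at least $n-1$ elements, each of order greater than $2$ (possibly even order) and matched to its inverse in the initial matching. One then simply deletes $(n-1)/2$ of these inverse-pair edges and matches the $n-1$ freed elements of $X$ one-to-one with the $n-1$ unmatched involutions; each such pair is an edge of the commuting graph because elements of $C_G(S)$ centralize every involution. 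No bound on $|\langle I(G)\rangle|$ or $\mathrm{Aut}(H)$ is needed, and the three-edge coset swap of Theorem~\ref{t:mp2} (which is what forces odd order, since in the power graph adjacency of $u$ to $ux$ needs $\langle u,x\rangle$ cyclic) is replaced by this much simpler rematching.
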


\begin{proof} We take $F(n)=2n\cdot n!$. So let $G$ be a group with even
order greater than $2n\cdot n!$ and suppose that $G$ contains $n$ involutions.
We begin with a matching $M$ as follows: elements of order greater than $2$
are matched to their inverses; the identity is mapped to one involution. If
$n=1$ we are finished, so suppose not.

The group $G$ acts by conjugation on the set $S$ of involutions. The kernel
of this action, which is $C_G(S)$, has index at most $n!$ in $G$, and so has
order at least $2n$; so , putting $X=C_G(S)\setminus(\{1\}\cup S)$, we have
$|X|\ge n-1$. Moreover, elements of $X$ have order greater than~$2$, and so are
matched with their inverses in $M$; and $X$ is inverse-closed. 

Pick $(n-1)/2$ inverse pairs in $X$, say $\{x_1,x_2\}$, \dots,
$\{x_{n-2},x_{n-1}\}$. Let $t_1,\ldots,t_{n-1}$ be the unmatched involutions
in $S$. Now delete the edges $\{x_{2i-1},x_{2i}\}$ from $M$ for
$i=1,\ldots,(n-1)/2$, and add the edges $\{x_1,t_2\}$, $\{x_2,t_2\}$, \dots,
$\{x_{n-1},t_{n-1}\}$ instead. (These are edges since $t_i\in S$ and
$x_i\in C_G(S)$.) The result is a perfect matching $M'$.
\end{proof}

The hypothesis in the above theorem is not enough in the case of power graphs, since, the power graph of $C_{2^n}\times C_{2^m}$ has no perfect matching even if we take $n$ and $m$ very large: the group has three involutions and one element
of odd order.

\begin{prop}
There is a function $F$ on the natural numbers with the following property:
Let $G$ be a finite group of even order, and $S$ the set of involutions in $G$.
Suppose that for every involution $u\in S$, there is an involution $v$ in $S$
which does not commute with $u$. If $|G|\ge F(|S|)$, then the power graph of
$G$ has a perfect matching.
\end{prop}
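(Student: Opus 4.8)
The plan is to exploit the fact that the hypothesis forbids any involution from commuting with all involutions, which --- together with Cauchy's theorem --- forces the kernel of the conjugation action of $G$ on its involutions to be a subgroup of odd order; Theorem~\ref{t:mp2} then finishes the argument. A suitable choice is $F(n) = n\cdot n!$.

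First I would set $n = |S| = |I(G)|$ and consider the action of $G$ by conjugation on the set $S$ of its involutions. This gives a homomorphism $G \to \mathrm{Sym}(S)$ whose kernel is $C_G(S)$, so $[G:C_G(S)] \le |S|! = n!$ and hence $|C_G(S)| \ge |G|/n!$. The key observation is that $C_G(S)$ contains no involution: if $w \in C_G(S)$ had order $2$, then $w$ would commute with every involution of $G$, contradicting the hypothesis applied with $u = w$. Since a finite group of even order contains an involution (Cauchy), $C_G(S)$ must have odd order. Consequently every element of $C_G(S)$ has odd order, so $O(C_G(S)) = C_G(S)$ and $|O(C_G(S))| = |C_G(S)| \ge |G|/n!$.

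Now, assuming $|G| \ge F(n) = n\cdot n!$, we obtain $|O(C_G(S))| \ge |G|/n! \ge n = |I(G)|$, and Theorem~\ref{t:mp2}(b) gives a perfect matching of $P(G)$. (One may also note in passing that $n$ is automatically odd, since $G$ has even order.) I do not expect a genuine obstacle: the argument reduces to two elementary facts --- the index bound $[G:C_G(S)] \le n!$ and the oddness of $|C_G(S)|$ --- after which Theorem~\ref{t:mp2} does all the work; the only point requiring thought is recognising that the non-commuting hypothesis is exactly what rules out involutions inside $C_G(S)$. Finally, the dihedral groups $D_p$ with $p$ an odd prime --- which satisfy the hypothesis but whose power graphs have deficiency about $|S|$ while $|G| = 2|S|$ --- show that the bound $|G| \ge F(|S|)$ cannot be dispensed with.
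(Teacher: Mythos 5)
Your proposal is correct and follows essentially the same route as the paper: take $F(n)=n\cdot n!$, use the conjugation action on $S$ to get $[G:C_G(S)]\le n!$ and hence $|C_G(S)|\ge n$, observe that the non-commuting hypothesis excludes involutions from $C_G(S)$ so that it has odd order and $O(C_G(S))=C_G(S)$, and then apply Theorem~\ref{t:mp2}. Your extra remarks (that $|S|$ is odd, and the $D_p$ examples showing some lower bound on $|G|$ is needed) are accurate but not required.
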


\begin{proof}
Take $F(n)=n.n!$. Now $G$ acts by conjugation on $S$, so $|G:C_G(S)|\le n!$.
Thus, $|C_G(S)|\ge n$. Now by hypothesis, no involution belongs to $C_G(S)$,
so $C_G(S)$ is a group of odd order. Thus the assumptions of
Theorem~\ref{t:mp2} are satisfied.
\end{proof}

\subsection{Embedding in groups whose power graph has a perfect matching}

\begin{thm}
Let $G$ be a finite group of even order, and suppose that the number of
elements of $G$ not matched in a matching of maximum size in $P(G)$ is $s$.
If $p$ is an odd prime greater than $s$, then $G\times C_p$ has a perfect
matching.
\end{thm}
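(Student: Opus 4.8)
The plan is to build a perfect matching of $P(G\times C_p)$ by hand, using a maximum matching of $P(G)$ as a scaffold which is then repaired by a few short augmenting paths. Write $C_p=\mathbb Z/p$ and, for $g\in G$, call $F_g=\{g\}\times C_p$ the \emph{fibre} over $g$. A short Chinese Remainder Theorem computation gives a clean dichotomy for the induced subgraph of $P(G\times C_p)$ on $F_g$: if $p\nmid o(g)$ it is the complete graph $K_p$ (for any $a,b$ one of $(g,a),(g,b)$ is a power of the other), while if $p\mid o(g)$ it has no edges at all (a power $(g,b)^k$ equal to $(g,a)$ would need $g^k=g$, hence $k\equiv1\pmod p$, hence $a=b$). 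I would also record two adjacencies in $P(G\times C_p)$ needed later: first, $(g,a)$ is joined to $(g^{-1},-a)=(g,a)^{-1}$ whenever $o((g,a))>2$; second, if $u\in I(G)$ and $b\ne0$ then $\langle(u,b)\rangle$ is cyclic of order $2p$ and contains $\{1\}\times C_p$, so $(u,b)$ is joined to \emph{every} vertex of $F_1$.

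Next I would fix the scaffold. Applying the first theorem of Section~2 with $X(G)=P(G)$ (legitimate, since $x\mapsto x^{-1}$ gives an edge at every element of order greater than $2$) yields a maximum matching $M$ of $P(G)$ whose set $U$ of unmatched vertices lies in $T=\{1\}\cup I(G)$; as $|G|$ is even, maximality forces $1\notin U$, so $U\subseteq I(G)$, while $s=|U|$ is even (being $|G|-2\mu(P(G))$) and smaller than $p$ by hypothesis. I then assemble a matching $M^{*}$ of $P(G\times C_p)$ in two stages. Stage~1 matches every vertex with nonzero $C_p$--coordinate: pair $(g,a)$ with $(g,-a)$ inside the clique $F_g$ when $p\nmid o(g)$, and with $(g^{-1},-a)$ when $p\mid o(g)$ (here every element involved has order greater than $2$). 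Each rule is a fixed-point-free involution on the relevant vertex set, the two sets partition $\{(g,a):a\ne0\}$, and all edges used are legitimate, so Stage~1 is a matching saturating everything outside the layer $L_0=G\times\{0\}$. Stage~2 transports $M$ onto $L_0$, on which the power graph is isomorphic to $P(G)$. The resulting matching $M^{*}$ leaves unmatched precisely the $s$ vertices $(u,0)$, $u\in U$, each an involution.

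Finally I would eliminate the deficit of $M^{*}$ with $s/2$ vertex--disjoint augmenting paths; this is exactly where the bound $p>s$ is used. Pair the unmatched vertices as $\{(u_{2i-1},0),(u_{2i},0)\}$ for $1\le i\le s/2$. Inside $F_1$, Stage~1 placed the $p-1\ge s$ vertices of $F_1\setminus\{(1,0)\}$ into $(p-1)/2$ matched edges, so we may assign distinct $i$ distinct such edges $\{(1,c_i),(1,c_i')\}$; and since each $F_{u_j}$ is a $K_p$, $M^{*}$ contains an edge $\{(u_j,a_j),(u_j,a_j')\}$ with $a_j,a_j'\ne0$. Using the two recorded adjacencies, one checks that
\[(u_{2i-1},0),\,(u_{2i-1},a_{2i-1}),\,(u_{2i-1},a'_{2i-1}),\,(1,c_i),\,(1,c'_i),\,(u_{2i},a_{2i}),\,(u_{2i},a'_{2i}),\,(u_{2i},0)\]
is an $M^{*}$--augmenting path; distinct $i$ use distinct fibres $F_{u_j}$ and distinct edges inside $F_1$, so the paths are pairwise disjoint. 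Flipping all of them converts $M^{*}$ into a perfect matching of $P(G\times C_p)$.

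The step I expect to cause the most trouble is the treatment of the fibres $F_g$ with $p\mid o(g)$. When $p\nmid|G|$ every fibre is a $K_p$ and the argument is essentially layer-by-layer together with the hub $F_1$; but in general these ``bad'' fibres are edgeless and their vertices are not joined to $F_1$, so Stage~1 must absorb them differently (here by the inverse involution $(g,a)\mapsto(g^{-1},-a)$, which stays within the bad fibres and pairs them perfectly because $p$ is odd). Checking that this piece meshes with the rest of $M^{*}$---that is, that the two pairing rules of Stage~1 genuinely partition $(G\times C_p)\setminus L_0$ into edges, and that none of the augmenting paths collide---is the part of the proof that has to be written out with care.
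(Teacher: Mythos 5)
Your proof is correct and follows essentially the same route as the paper: both start from a maximum matching of $P(G)$ whose $s$ unmatched vertices are involutions, exploit the cliques $\{u\}\times(C_p\setminus\{0\})$ over those involutions together with the column over the identity (where the hypothesis $p>s$ supplies exactly the needed capacity), and absorb all remaining vertices by pairing each element with its inverse. Your packaging of the final step as $s/2$ vertex-disjoint augmenting paths, and your explicit dichotomy for fibres with $p\mid o(g)$, are organisational variants of the paper's direct column-by-column construction rather than a different argument.
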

\begin{proof}
Let $t_1,\ldots,t_s$ be the elements unmatched in some matching of maximum
size in $P(G)$. We know that without loss of generality we can assume that
$t_1,\ldots,t_s$ are involutions. (The set of unmatched vertices can be taken
to be a subset of $\{g\in G:g^2=1\}$, and the identity can be matched to any
other vertex.) Note that $s$ is even.

Take $p>s$, and let $x$ be a generator of $C_p$ in the group $G\times C_p$.
Let $A_0=\langle x\rangle\setminus\{1\}$, and for $1\le i\le s$ let
$A_i=A_0t_i$. Each set $A_i$ for $0\le i\le s$ induces a complete graph in
$P(G\times C_p)$, and we have all possible edges between $A_0$ and $A_i$
for $i>0$. Moreover, $t_i$ is joined to every vertex in $A_i$. Also,
$|A_i|=p-1$ for all $i$.

Choose an edge from $t_i$ to a vertex in $A_i$ for each $i$ and add to the
matching on $G$. There remain $p-2$ unmatched vertices in $A_i$; choose one,
and match it to a vertex in $A_0$, using distinct vertices for different $i$.
This leaves $p-3$ unmatched vertices in $A_i$, an even number, and
$p-1-s$ unmatched vertices in $A_0$, also an even number since $s$ is even.
So we can extend the matching by pairing up the unmatched vertices in $A_i$
for all $i$.

Finally, the vertices not yet matched come in inverse pairs, since they lie
outside the union of the subgroups $G$ and $\langle xt_i\rangle$; so we can
match each remaining vertex with its inverse.

\end{proof}

As a companion piece we have the following:

\begin{thm}
Let $G$ be a finite group of odd order. Then $P(G\times C_2)$ has a perfect
matching.
\end{thm}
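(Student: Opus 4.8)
The plan is to reduce this statement immediately to the Corollary in Section~2. First I would note that, since $|G|$ is odd, $G$ contains no involutions: an involution would generate a subgroup of order~$2$, contradicting Lagrange's theorem. Writing $C_2=\{1,z\}$, an element $(g,z)$ of $G\times C_2$ has order $\lcm(o(g),2)=2\,o(g)$, because $o(g)$ is odd; hence $(g,z)^2=1$ forces $o(g)=1$, that is, $g=1$. Similarly no element $(g,1)$ with $g\ne1$ is an involution. So $G\times C_2$ has a \emph{unique} involution, namely $(1,z)$, and part~(2) of the Corollary at once gives that $P(G\times C_2)$ has a perfect matching.

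If instead one prefers an explicit matching, I would pair $(g,1)$ with $(g,z)$ for each $g\in G$. This is an edge of $P(G\times C_2)$: for $g\ne1$ the element $(g,z)$ has order $2\,o(g)$, and since $\gcd(o(g),2)=1$ there is, by the Chinese Remainder Theorem, an integer $k$ with $k\equiv1\pmod{o(g)}$ and $k$ even, whence $(g,z)^k=(g,1)$; and for $g=1$ the pair $\{(1,1),(1,z)\}$ is an edge since $(1,z)$ has order~$2$ with $(1,z)^2=(1,1)$. These $|G|$ edges are pairwise disjoint and saturate all $2|G|$ vertices, so they form a perfect matching.

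There is essentially no obstacle here: all the content lies in observing that forming the direct product of an odd-order group with $C_2$ produces exactly one involution, after which the earlier Corollary (or the one-line pairing above) finishes the argument. The only computation requiring a moment's care is $o((g,z))=2\,o(g)$, which uses precisely that $o(g)$ is odd.
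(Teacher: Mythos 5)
Your proposal is correct and matches the paper's argument, which consists precisely of observing that $G\times C_2$ has a unique involution and invoking the earlier Corollary; your explicit pairing of $(g,1)$ with $(g,z)$ is a valid (and slightly more self-contained) bonus, but the core route is the same.
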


\begin{proof} $G\times C_2$ has a unique involution.
\end{proof}

\subsection{$2$-groups}

The following theorem characterises the $2$-groups having perfect matchings in their power graphs.

\begin{thm}
Let $G$ be a finite group with $|G|=2^n$. Then $P(G)$ has a perfect matching if and only if $G$ is cyclic or generalized quaternion.
\end{thm}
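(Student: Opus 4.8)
\textbf{Proof proposal.} The plan is to reduce the statement to two ingredients: Theorem~\ref{t:mp1} of this paper, and the classical structure theorem describing finite $2$-groups with a unique involution. Throughout we may assume $n\ge 1$, so that $G$ is a nontrivial $2$-group and hence contains at least one involution (if $x\ne 1$ has order $2^k$, then $x^{2^{k-1}}$ is an involution); the degenerate case $n=0$ is excluded. The key simplification is that in a $2$-group every non-identity element has order a power of $2$, so the set $O(G)$ of elements of odd order is exactly $\{1\}$, whence $|O(G)|=1$. This makes the inequality in Theorem~\ref{t:mp1} extremely restrictive, which is the whole point.

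For the forward implication, suppose $P(G)$ has a perfect matching. By Theorem~\ref{t:mp1}(b) we get $|I(G)|\le|O(G)|=1$, so $G$ has exactly one involution, i.e.\ a unique subgroup of order $2$. Now I would invoke the classical classification: a finite $p$-group with a unique subgroup of order $p$ is cyclic when $p$ is odd, and is cyclic or generalized quaternion when $p=2$ (see~\cite{bc}, or any standard text on finite group theory). Hence $G$ is cyclic or generalized quaternion.

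For the converse, observe that a cyclic group of order $2^n$ has a unique involution, namely the unique element of order $2$ in its unique subgroup of order $2$, and that a generalized quaternion group has a unique involution, namely the non-identity element of its centre. In either case $G$ has exactly one involution, so by the corollary to Theorem~2.1 (the statement that a finite group with a unique involution has a perfect matching in its power graph) we conclude that $P(G)$ has a perfect matching.

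Overall the proof is short, and essentially everything follows from the fact that $O(G)$ is trivial for a $2$-group together with Theorem~\ref{t:mp1} and its corollary. The one ingredient not already established earlier in the paper is the classification of $2$-groups possessing a single involution; so the point requiring care — and the place I would focus the write-up — is citing that result precisely (or, if a self-contained treatment is wanted, including its standard proof, e.g.\ via an analysis of maximal cyclic subgroups).
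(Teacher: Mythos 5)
Your proposal is correct and follows essentially the same route as the paper: since $O(G)=\{1\}$ for a $2$-group, the bounds force a perfect matching to exist exactly when $G$ has a unique involution, and the classification of $2$-groups with a unique involution (cyclic or generalized quaternion) finishes the argument. The only cosmetic difference is that you use the corollary to Theorem~2.1 for the converse where the paper cites Theorem~\ref{t:mp2}, and you should cite a standard group-theory text (e.g.\ Robinson) rather than~\cite{bc} for the unique-subgroup-of-order-$p$ classification.
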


\begin{proof}
We have $|O(G)|=|O(G_G(S))|=1$, so Theorems~\ref{t:mp1} and \ref{t:mp2} show
that $G$ has a perfect matching if and only if it has a unique involution. The
$2$-groups with unique involution are the cyclic and generalized quaternion
groups.
\end{proof}

\section{A number-theoretic result}

The functions $\tau(n)$ (the number of divisors of $n$) and $\phi(n)$ (Euler's
totient function) are two of the best-studied in number theory. The result we
require about them is elementary, but we have not found a proof in the
literature.

\begin{thm}
Let $n$ be a positive integer. If $n\ge30$, then $\tau(n)<\phi(n)$.
\label{t:nt}
\end{thm}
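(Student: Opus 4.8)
The plan is to reduce the inequality $\tau(n)<\phi(n)$ to a statement about the multiplicative structure of $n$, using the fact that both $\tau$ and $\phi$ are multiplicative. First I would record the behaviour of the two functions on prime powers: $\tau(p^a)=a+1$ and $\phi(p^a)=p^{a-1}(p-1)$. The key elementary observation is that for each prime power $p^a$ we can compare these directly. For $p\ge3$ one checks that $\phi(p^a)\ge\tau(p^a)$ always, with equality only for $p^a=3$; indeed $p^{a-1}(p-1)\ge 2\cdot 3^{a-1}\ge a+1$ for $p\ge3$, $a\ge1$ (a one-line induction on $a$). For $p=2$ the comparison fails for small exponents: $\phi(2)=1<2=\tau(2)$, $\phi(4)=2=\tau(4)$, $\phi(8)=4<4$? — in fact $\phi(8)=4=\tau(8)$, $\phi(16)=8>5$, and from $a\ge4$ onwards $\phi(2^a)=2^{a-1}>a+1=\tau(2^a)$. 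So the ``bad'' prime powers — those with $\tau(p^a)\ge\phi(p^a)$ — are exactly $2,3,4,8$, and among these only $2$ has the strict inequality $\tau>\phi$ while the others have equality.

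Next I would exploit multiplicativity. Writing $n=\prod p_i^{a_i}$, we have
\[
\frac{\tau(n)}{\phi(n)}=\prod_i\frac{\tau(p_i^{a_i})}{\phi(p_i^{a_i})}.
\]
Each factor with $p_i^{a_i}\notin\{2,3,4,8\}$ is strictly less than $1$, and moreover is bounded above by some fixed constant $c<1$ (the largest such ratio occurs at the smallest admissible prime power, e.g.\ $p^a=5$ giving $\tau/\phi=2/4=1/2$, or $p^a=16$ giving $5/8$, so one can take $c=5/8$ or better after a finite check). The factors coming from $2,3,4,8$ contribute at most $\tau(2)/\phi(2)\cdot\tau(3)/\phi(3)=2\cdot(3/2)=3$, but of course $2,4,8$ are mutually exclusive as they are powers of the same prime, so at most one of them appears; the total contribution from the ``bad'' part is therefore at most $\frac{\tau(8)}{\phi(8)}\cdot\frac{\tau(3)}{\phi(3)}=1\cdot\frac{3}{2}=\frac{3}{2}$, attained only at $n=24$, with nearby values like $n=2$, $n=6$, $n=12$ also giving ratio $\ge1$.

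The argument then concludes as follows. If $n$ has at least two prime-power factors outside $\{2,3,4,8\}$ — or even one ``large'' such factor — the product of the good factors is small enough (at most $c^2$ or a single small $c$) to beat the bounded bad contribution, forcing $\tau(n)<\phi(n)$. This leaves only finitely many $n$ all of whose prime-power factors lie in a small explicit set, and these can be enumerated and checked directly; the check should reveal that $24$ is the largest $n$ with $\tau(n)\ge\phi(n)$, so $n\ge30$ (indeed $n\ge25$) suffices. The main obstacle is purely bookkeeping: getting the constants right so that ``at least one non-trivial good factor'' already suffices, rather than needing two, which matters because $n$ could be of the form $2^a\cdot q^b$ with only one odd prime; here one uses that if $2^a\notin\{2,4,8\}$ then the $2$-part itself is a good factor, and if $2^a\in\{2,4,8\}$ then $q^b$ with $q\ge5$ is the good factor, and $3$ contributes at most $3/2$ while $q^b\ge5$ contributes at most $1/2$, giving ratio $\le 3/4<1$ — so in fact no genuinely delicate estimate is needed, only careful case separation on how many of the primes $2$ and $3$ divide $n$ and with what exponents.
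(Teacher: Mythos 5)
Your overall strategy --- compare $\tau$ and $\phi$ prime power by prime power, isolate the few ``bad'' prime powers, bound the contribution of the good factors by a fixed ratio, and finish with a finite check --- is essentially the paper's own approach (the paper phrases the prime-power comparison additively, via the lemma $p^{a-1}(p-1)\ge a+1$, resp.\ $\ge 2(a+1)$, while you phrase it multiplicatively via the ratio $\tau/\phi$, but the idea is the same). However, your execution contains arithmetic errors that break the decisive estimate. First, $\tau(4)=3$, not $2$: the prime power $4$ gives a \emph{strict} failure with $\tau(4)/\phi(4)=3/2$, not equality. Second, $\tau(3)/\phi(3)=2/2=1$, not $3/2$. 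Third, with the correct values the worst possible ``bad'' contribution is $2$ (coming from the factor $2^1$; e.g.\ $n=6$ has ratio $2$), not $3/2$, and it is certainly not attained at $n=24$, where $\tau(24)=\phi(24)=8$ gives ratio $1$. These slips feed directly into your final case $2^a\in\{2,4,8\}$ with an odd prime power $q^b$, $q\ge5$: the correct bound there is $2\cdot 1\cdot\frac{1}{2}=1$, not $\le 3/4<1$, and equality is genuinely attained, e.g.\ at $n=10$ ($\tau=\phi=4$), $n=18$ ($\tau=\phi=6$) and $n=30$ ($\tau=\phi=8$). Consequently your concluding claim that $24$ is the largest $n$ with $\tau(n)\ge\phi(n)$, so that $n\ge25$ suffices, is false: $\tau(30)=\phi(30)=8$.

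To repair the argument you must track the equality cases among the ``good'' factors (ratio exactly $\frac{1}{2}$ occurs at $5$ and $9$, ratio exactly $1$ at $3$ and $8$) and enumerate the resulting finite list of exceptions; done correctly this reproduces the paper's list $1,2,3,4,6,8,10,12,18,24,30$ of integers with $\tau(n)\ge\phi(n)$. Note in passing that this list contains $30$ itself, so the strict inequality really holds only for $n>30$ (and the paper's corollary only invokes it in that range); your bookkeeping, once corrected, would surface this boundary case rather than stopping at $24$.
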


The proof depends on the formulae for these functions: if
$n=\displaystyle{\prod_{i=1}^rp_i^{a_i}}$, where $p_1,\ldots,p_r$ are distinct
primes and $a_1,\ldots,a_r$ are positive integers, then
\begin{enumerate}
\item $\tau(n)=\displaystyle{\prod_{i=1}^r(a_i+1)}$,
\item $\phi(n)=\displaystyle{\prod_{i=1}^rp_i^{a_i-1}(p_i-1)}$.
\end{enumerate}

We use the following technical lemma:

\begin{lem}
Let $p$ be a prime, and $a$ a positive integer.
\begin{enumerate}
\item If $(p,a)\notin\{(2,1),(2,2)\}$, then $p^{a-1}(p-1)\ge a+1$, with
equality only  if $(p,a)\in\{(2,3),(3,1)\}$.
\item If $p\ne2$ and $(p,a)\ne(3,1)$, then $p^{a-1}(p-1)\ge2(a+1)$, with
equality only if $(p,a)=(5,1)$.
\end{enumerate}
\end{lem}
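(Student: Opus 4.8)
The plan is to prove each part by a case analysis on the prime $p$, treating small primes explicitly and then disposing of all large primes with a single estimate. Both inequalities compare the multiplicative factor $p^{a-1}(p-1)$ (contributed by $p^a$ to $\phi(n)$) against $a+1$ or $2(a+1)$ (the factor contributed to $\tau(n)$), so it is natural to fix $p$ and view both sides as functions of $a$.

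For part (a): first I would handle $p=2$ directly. Then $p^{a-1}(p-1)=2^{a-1}$, and one checks $2^{a-1}\ge a+1$ for $a\ge3$ by induction on $a$ (the base case $a=3$ gives $4=4$, equality, and doubling the left side beats adding $1$ to the right for $a\ge3$); the excluded cases $a=1,2$ give $1<2$ and $2<3$, which is why they are listed as exceptions. Next, for $p=3$ we have $3^{a-1}\cdot 2\ge a+1$; the case $a=1$ gives equality ($2=2$), and for $a\ge2$ the left side already exceeds $a+1$ and grows by a factor $3$ each step while the right side grows additively. Finally, for $p\ge5$ we have $p^{a-1}(p-1)\ge 4\cdot p^{a-1}\ge 4\cdot 5^{a-1}$, and an easy induction shows $4\cdot 5^{a-1}\ge a+1$ strictly for all $a\ge1$ (base case $4\ge2$). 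Collecting the cases shows $p^{a-1}(p-1)\ge a+1$ outside $\{(2,1),(2,2)\}$, with equality exactly at $(2,3)$ and $(3,1)$.

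For part (b): here $p\ge3$, and $(3,1)$ is excluded because $3^0\cdot 2=2<4=2\cdot(1+1)$. For $p=3$ and $a\ge2$ we need $2\cdot 3^{a-1}\ge 2(a+1)$, i.e. $3^{a-1}\ge a+1$; the base case $a=2$ gives $3\ge3$ (equality), but since $(3,2)$ still satisfies the weak inequality and I only need to identify where equality holds in the \emph{final} claim, I would note $3^{a-1}>a+1$ strictly for $a\ge3$ and handle $a=2$ as a non-strict case that does not produce the unique equality claimed — wait, I should double-check: the lemma asserts equality \emph{only} if $(p,a)=(5,1)$, so I must verify $(3,2)$ does \emph{not} give equality in part (b), and indeed $2\cdot3^{2-1}=6$ while $2(2+1)=6$, so $(3,2)$ \emph{does} give equality. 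This means either the lemma statement intends a slightly different bound or $(3,2)$ should also be listed; I would flag this and, assuming the intended statement, check $p=5$: $5^{a-1}\cdot4\ge 2(a+1)$ gives $4\ge4$ at $a=1$ (equality) and strict inequality for $a\ge2$. For $p\ge7$: $p^{a-1}(p-1)\ge 6\cdot 7^{a-1}>2(a+1)$ for all $a\ge1$.

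The routine inductions are the bulk of the work but are genuinely mechanical. The one subtle point — and the place I expect the real care is needed — is the precise bookkeeping of equality cases, particularly reconciling $(3,2)$ in part (b): I would either confirm it belongs in the exception list alongside $(5,1)$, or re-examine whether the intended bound in (b) is strict for $a\ge2$. Everything else reduces to the observation that $p^{a-1}(p-1)$ grows geometrically in $a$ with ratio $p\ge2$, while $a+1$ and $2(a+1)$ grow linearly, so after checking one or two base cases per prime the inequality is automatic.
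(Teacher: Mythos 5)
Your proposal is correct and follows essentially the same strategy as the paper: fix the prime $p$ and exploit that $p^{a-1}(p-1)$ grows geometrically in $a$ while $a+1$ (or $2(a+1)$) grows linearly, so only the smallest values of $a$ for each small prime need explicit checking. The paper phrases the monotonicity via the derivative of $f(x)=p^{x-1}(p-1)-(x+1)$, whereas you use per-prime inductions; that difference is cosmetic. More importantly, your flag about part (b) is justified: $(p,a)=(3,2)$ gives $3^{1}\cdot2=6=2(2+1)$, so equality does occur there and the lemma's clause ``equality only if $(p,a)=(5,1)$'' is incomplete as stated. This slip does not damage the main theorem, because its proof only invokes \emph{strict} inequality when the factorization contains $3^3$, $5^2$, or a prime exceeding $5$ (and treats $n=36$ separately), which is exactly consistent with $(3,2)$ and $(5,1)$ being the equality cases you identified. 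So your bookkeeping of equality cases is the accurate one, and the exception list in (b) should read $(p,a)\in\{(3,2),(5,1)\}$.
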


\begin{proof}
The function $f(x)=p^{x-1}(p-1)-(x+1)$ has derivative
$f'(x)=p^{x-1}(p-1)\log p-1$, which is positive for $x\ge1$ if $p\ne2$,
and for $x\ge2$ if $p=2$. So for each $p$ we only have to check the smallest
values of $x$.
\end{proof}

\paragraph{Proof of the theorem}
To prove the theorem, we see that if $n$ is odd or divisible by $8$,
then $\phi(n)\ge\tau(n)$, with strict inequality if the factorization includes
$2^4$, $3^2$, or a prime larger than $3$. If $n$ is exactly divisible by $2^a$
with $a=1$ or $a=2$, then $2^{a-1}(2-1)\ge\frac{1}{2}(a+1)$, and so as long
as we have a factor $3^3$, $5^2$ or a prime greater than $5$ the strict
inequality holds. The cases $n=20$ and $n=36$ satisfy the conclusion. Thus,
the only cases for which it fails are $1, 2, 3, 4, 6, 8, 10, 12, 18, 24, 30$.

\medskip

The result we actually require is the following corollary of this theorem.
The \emph{independence number} $\alpha(\Gamma)$ of a graph $\Gamma$ is the
size of the largest set of vertices containing no edges.

\begin{cor}
Let $n$ be a positive integer. If $n\notin\{2,6\}$, then the independence
number of the power graph of the cyclic group $C_n$ is strictly less than
$\phi(n)$.
\end{cor}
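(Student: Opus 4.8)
The plan is to reduce the computation of $\alpha(P(C_n))$ to a purely number-theoretic quantity. The key observation is that adjacency in $P(C_n)$ is governed entirely by the orders of the two elements: since a finite cyclic group has a unique subgroup of each order dividing $n$, for $x,y\in C_n$ we have $x\in\langle y\rangle$ if and only if $\langle x\rangle\le\langle y\rangle$, which happens if and only if $o(x)\mid o(y)$. Hence $x$ and $y$ are adjacent in $P(C_n)$ precisely when $o(x)\mid o(y)$ or $o(y)\mid o(x)$. In particular the $\phi(d)$ elements of order $d$ form a clique, while two elements whose orders are incomparable under divisibility are non-adjacent. It follows that $\alpha(P(C_n))$ equals the largest size of a family of divisors of $n$, no one of which divides another: from such a family $\{d_1,\dots,d_k\}$ one obtains an independent set by choosing a single element of each order $d_i$, and conversely an independent set contains at most one element of each order, and those orders must be pairwise incomparable.

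From this, first I would extract the crude bound $\alpha(P(C_n))\le\tau(n)$, the total number of divisors of $n$. By Theorem~\ref{t:nt}---more precisely, by the complete list of exceptions established in its proof---we have $\tau(n)<\phi(n)$ whenever $n\notin\{1,2,3,4,6,8,10,12,18,24,30\}$, and for every such $n$ the corollary is immediate. It then remains only to handle the finitely many $n$ in this list other than $2$ and $6$, for each of which I would compute the largest divisibility-antichain directly. For $n\in\{3,4,8\}$ the divisors of $n$ form a chain, so $P(C_n)$ is complete and $\alpha(P(C_n))=1<\phi(n)$; for $n\in\{10,12,18,24\}$ the largest such antichain has size $2$ while $\phi(n)\ge4$; and for $n=30$ the largest antichain is $\{2,3,5\}$, of size $3$, while $\phi(30)=8$. (Note that $n=1$ gives $\alpha(P(C_1))=1=\phi(1)$, so strictly one should exclude $n=1$ as well; as $C_1$ is trivial this is harmless, and I would phrase the result for $n\ge2$.)

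There is no serious obstacle here: the whole argument rests on the single structural fact that in the power graph of a cyclic group adjacency depends only on the divisibility relation between element orders, and this is exactly what makes the cyclic case tractable. Once that is in place, the upper bound $\alpha(P(C_n))\le\tau(n)$ costs nothing, Theorem~\ref{t:nt} disposes of all but finitely many $n$, and the remaining cases are a short finite check. The only mild subtlety is remembering that the equality $\alpha(P(C_n))=\phi(n)$ does genuinely occur for $n\in\{2,6\}$ (and essentially only there), which is why those values must be excluded.
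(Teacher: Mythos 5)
Your proof is correct and follows essentially the same route as the paper: the bound $\alpha(P(C_n))\le\tau(n)$ from the fact that elements of equal (or divisibility-comparable) order are adjacent, Theorem~\ref{t:nt} to dispose of all but finitely many $n$, and a direct check of the exceptional values (your antichain refinement is exactly the paper's subsequent remark). Your side observation about $n=1$, where $\alpha(P(C_1))=1=\phi(1)$, is a fair catch of a trivial edge case the corollary's statement overlooks.
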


\begin{proof}
In a cyclic group $C_n$, if two elements have the same order, then each is a
power of the other, so they are joined in the power graph. So an independent
set in the power graph has at most one element of each possible order, and
its cardinality is at most $\tau(n)$. By Theorem~\ref{t:nt}, the conclusion
holds if $n>30$; it is easily checked directly for smaller values of $n$.
\end{proof}

\begin{rk} In fact, it is easy to see that the independence number of
$P(C_n)$ is the size of the largest antichain in the lattice of divisors
of $n$. If $n$ is a product of $m$ primes (not necessarily distinct), then 
an antichain of maximum size is obtained by taking all distinct products of
$\lfloor m/2\rfloor$ primes, or all distinct products of $\lceil m/2\rceil$
primes. (This extension of the celebrated Sperner lemma was
proved by de Bruijn \emph{et al.}~\cite{bek}.) This fact can be used to
simplify the calculations in the Corollary.
\end{rk}

\section{The matching number of the enhanced power graph}

Recall that the \emph{enhanced power graph} $P_e(G)$ of a finite group $G$
is the graph with vertex set $G$ in which two vertices $x$ and $y$ are joined
if there exists $z$ such that both $x$ and $y$ are powers of $z$ (in other
words, if $\langle x,y\rangle$ is cyclic). So the enhanced power graph contains
the power graph as a spanning subgraph, and its matching number is at least
as great as that of the power graph.

From our earlier work, there are several cases where equality holds:
\begin{enumerate}
\item If $|G|$ is odd, then the power graph has a matching covering all
but one vertex; the same is true of the enhanced power graph.
\item If the power graph of $G$ has a perfect matching, then so does the
enhanced power graph.
\item Examining the proof of the formula for the matching number of the
power graph of a nilpotent group (Theorem~\ref{t:nilp}), we see that the
same formula holds for the enhanced power graph.
\end{enumerate}

In fact, we are going to prove that the matching numbers are always equal,
even in cases where we cannot compute them:

\begin{thm}
Let $G$ be a finite group. Then the matching numbers of the power graph and
the enhanced power graph of $G$ are equal.
\label{t:enhanced}
\end{thm}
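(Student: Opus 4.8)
The plan is to show that any matching $M$ in the enhanced power graph $P_e(G)$ can be transformed into a matching $M'$ in the power graph $P(G)$ with $|M'|\ge|M|$. Since $P(G)$ is a spanning subgraph of $P_e(G)$, the reverse inequality is immediate, so this suffices. The key structural observation is that the edges of $P_e(G)$ which are \emph{not} edges of $P(G)$ join two elements $x,y$ with $\langle x,y\rangle$ cyclic but with neither a power of the other; in particular both $x$ and $y$ lie in a common cyclic subgroup $\langle z\rangle$, and the cyclic group $\langle z\rangle$ has both $x$ and $y$ among its generators of some proper subgroups, or generators of $\langle z\rangle$ itself. So every ``bad'' edge lives inside a cyclic subgroup of $G$.

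The strategy I would use is a local swapping argument organised by cyclic subgroups, mirroring the alternating-path technique used in the proof of the first theorem of the excerpt. Given a maximum matching $M$ of $P_e(G)$, first decompose the bad edges of $M$ according to which maximal cyclic subgroups they sit inside. Within a single cyclic group $C=\langle z\rangle$, the induced subgraph of $P(G)$ is the power graph $P(C)$, whose structure is completely understood: elements of the same order are mutually adjacent, and the generators of $C$ are adjacent to everything. The heart of the argument is a \emph{lemma about cyclic groups}: if $M_C$ is a matching of $P_e(C)=$ the complete graph on $\langle z\rangle$ restricted appropriately (in fact $P_e(C)$ is complete since $C$ is cyclic), then one can find a matching of $P(C)$ of the same size on the same vertex set, possibly rerouting through vertices of $C$ not yet used. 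The extreme case to watch is when $C=C_n$ with $n$ having many divisors, so that $P(C_n)$ is far from complete; here I would invoke the Corollary proved via Theorem~\ref{t:nt}, which says that (except for $n\in\{2,6\}$) the independence number of $P(C_n)$ is strictly less than $\phi(n)$, hence $P(C_n)$ has a matching leaving at most $\phi(n)-1$ vertices uncovered among the $\phi(n)$ generators — enough slack to absorb the bad edges by pairing each $\phi$-generator with a lower-order element it powers to. The sporadic small cases $n=2,6$ get checked by hand and cause no trouble because $\phi(2)=1$ and in $C_6$ the bad edges are easily rerouted.

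Concretely, the reduction would go: (i) show every edge of $P_e(G)\setminus P(G)$ lies in a unique maximal cyclic subgroup and that distinct maximal cyclic subgroups meet only in elements whose $P_e$-edges to outside are already $P(G)$-edges, so the bad edges of $M$ are partitioned by maximal cyclic subgroups; (ii) for each maximal cyclic subgroup $C$ carrying bad edges of $M$, let $V_C$ be the set of $M$-matched vertices of $C$ together with any $M$-unmatched vertices of $C$, and replace $M$ restricted to $C$ by a matching of $P(C)$ on a vertex set containing all the originally matched vertices of $C$, using the cyclic-group lemma; (iii) observe these local replacements are vertex-disjoint across different maximal cyclic subgroups, and vertices outside all maximal cyclic subgroups carrying bad edges are untouched, so gluing gives a matching $M'$ of $P(G)$ with $|M'|\ge|M|$. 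The main obstacle is step (ii): proving the cyclic-group lemma uniformly, i.e.\ that inside $C_n$ any matching (of the complete graph on $C_n$) can be converted to a matching of the power graph $P(C_n)$ of the same size on a vertex set containing the original matched set. The natural approach is induction on the divisor lattice of $n$, or a direct alternating-path argument: whenever a bad edge $\{x,y\}$ appears with $x,y$ both of order $d\mid n$ ($d<n$), reroute using a generator of $C_n$, the generators forming a clique adjacent to everything; the counting obstruction is exactly that there might not be enough free generators, which is where the Corollary $\alpha(P(C_n))<\phi(n)$ supplies the needed headroom. I expect that once the cyclic-group lemma is nailed down, the global assembly is routine bookkeeping of the kind already carried out in Section~2.
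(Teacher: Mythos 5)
Your overall strategy---convert a maximum matching of $P_e(G)$ into a matching of $P(G)$ of the same size by rerouting inside cyclic subgroups, with the corollary $\alpha(P(C_n))<\phi(n)$ (and the exceptional values $n\in\{2,6\}$) supplying the headroom---is the paper's strategy in spirit, but the specific reduction you propose has genuine gaps. Step (i) is false as stated: an edge of $P_e(G)\setminus P(G)$ need not lie in a unique maximal cyclic subgroup. In $Q_8\times C_3$, the edge joining the central involution to an element of order $3$ lies in all three maximal cyclic subgroups of order $12$. Worse for step (iii), distinct maximal cyclic subgroups are far from vertex-disjoint (in the same example they share a common subgroup of order $6$), so the local replacements carried out in different subgroups can compete for the same vertices and the ``gluing'' does not go through. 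Finally, in step (ii) you cannot treat every matched vertex of $C$ as freely re-matchable inside $C$: a vertex of $C$ of non-maximal order may be matched by $M$ to a vertex outside $C$, and re-matching it inside $C$ orphans that partner, while keeping such cross edges means the vertices actually available for rerouting inside $C$ are fewer than your cyclic-group lemma assumes. The absolute form of that lemma is in fact true (for $n$ even $P(C_n)$ has a perfect matching, and for $n$ odd it is factor-critical), but the relative version you would need, with part of $C$ frozen by cross edges, is neither stated nor proved, and it is exactly where the interaction between overlapping cyclic subgroups bites.

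The ingredient that makes the paper's proof work, and which is missing from your plan, is an extremal choice that replaces the global decomposition: pick a single bad edge $\{g,h\}$ of $M$ with $l=\lcm(o(g),o(h))$ as large as possible, and work inside $C=\langle g,h\rangle$ itself (not a maximal cyclic subgroup). Each generator $x_i$ of $C$ is adjacent in $P(G)$ to all of $C$; if some $x_i$ is unmatched, or if its partner $y_i$ has $x_i$ as a power of $y_i$, a one- or two-edge swap removes the bad edge; and the case where neither of $x_i,y_i$ is a power of the other is impossible, since $\{x_i,y_i\}$ would be a bad edge with larger lcm, contradicting the choice of $\{g,h\}$. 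Hence all partners $y_i$ lie in $C$, and only then is $\alpha(P(C_l))<\phi(l)$ invoked to find a power-graph edge $\{y_i,y_j\}$ among the partners, permitting a three-edge swap. Each step keeps the matching size and reduces the number of non-power-graph edges by one, so iteration finishes the proof with no disjointness of cyclic subgroups ever needed. Without the maximality of the lcm (or some substitute controlling how bad edges interact across overlapping cyclic subgroups), your argument does not close.
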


\begin{proof}
Let $G$ be any finite group. Choose a matching $M$ of maximum size in the
enhanced power graph. If all its edges belong to the power graph, there is
nothing to prove. Otherwise, we are going to change $M$ to $M'$ so that $M'$
is a matching of the same size and has one fewer edge which doesn't belong
to the power graph.

So let $\{g,h\}$ be an edge of the matching $M$ which belongs to the enhanced
power graph but not to the power graph. Choose this edge so that
$\lcm(o(g),o(h))$ is as large as possible. Let $l$ be this lcm. Then
$\langle g,h\rangle=C$ is a cyclic group of order $l$. Let
$x_1,\ldots,x_{\phi(l)}$ be the generators of $C$. They are joined to all
vertices in $C$ in the power graph.

Assume first that least one of $x_1,\ldots,x_{\phi(l)}$, say $x_i$, is
not covered by the edges of $M$. Then we can replace the edge $\{g,h\}$
by the edge $\{g,x_i\}$, which is an edge of the power graph.

\medskip

So we can assume that all of $x_1,\ldots,x_{\phi(l)}$ are covered by edges
in $M$. Let $\{x_i,y_i\}$ be an edge of $M$ for $i=1,\ldots,\phi(l)$.

For each $i$, there are three cases:
\begin{enumerate}
\item $x_i$ is a power of $y_i$;
\item $y_i$ is a power of $x_i$;
\item neither of the above.
\end{enumerate}

In case (a), $g$ and $h$ are powers of $x_i$, and hence also powers of $y_i$.
So we can replace the edges $\{g,h\}$ and $\{x_i,y_i\}$ by $\{g,x_i\}$ and
$\{h,y_i\}$, both of which are edges of the power graph.

In case (c), $\{x_i,y_i\}$ is an edge of the enhanced power graph but not of
the power graph, and $\lcm(o(x_i),o(y_i))>l$, contradicting the choice of the
edge $\{g,h\}$.

So we must be in case (b) for all $i$. This means that all of $y_1,\ldots,
y_{\phi(l)}$ belong to $C$.

\medskip

Now suppose that $l\notin\{2,6\}$. Then the independence
number of the power graph of $C$ is strictly smaller than $\phi(l)$; so the
set $\{y_1,\ldots,y_{\phi(l)}\}$ is not an independent set in the power graph,
and so it contains at least one edge, say $\{y_i,y_j\}$. In this case, we
replace the three edges $\{g,h\}$, $\{x_i,y_i\}$, $\{x_j,y_j\}$ by 
$\{g,x_i\}$, $\{h,x_j\}$, $\{y_i,y_j\}$, all edges of the power graph.

\medskip

Finally, the case $l=2$ is clearly impossible. If $l=6$, let
$C=\langle z\rangle$ be the cyclic group of order~$6$. There are just two 
nonedges of the power graph, namely $\{z^3,z^2\}$ and $\{z^3,z^4\}$;
without loss of generality, $\{g,h\}=\{z^2,z^3\}$. We have $\{x_1,x_2\}=
\{z,z^5\}$. Hence necessarily $\{y_1,y_2\}=\{1,z^4\}$. But this is an edge of
the power graph, so the argument in the preceding paragraph applies.
\end{proof}

\section{Conclusion and open problems}

The most important problem we have been unable to solve is the following.

\begin{prob}
\begin{enumerate}
\item Find the matching number of $P(G)$ for any finite group $G$.
\item Find a necessary and sufficient condition for a group $G$ to have a
perfect matching.
\end{enumerate}
\end{prob}

The preceding section gives an interesting light on the relation between the
power graph and the enhanced power graph. We mention the following known
result. A finite group is an \emph{EPPO group} (for \textbf{E}lements of
\textbf{P}rime \textbf{P}ower \textbf{O}rder if every element has prime power
order. The \emph{Gruenberg--Kegel graph} (or \emph{prime graph}) of a group
$G$ has vertex set the set of prime divisors of $|G|$, with an edge from $p$
to $q$ if and only if $G$ contains an element of order $pq$.

\begin{thm}
The following conditions on a finite group $G$ are equivalent:
\begin{enumerate}
\item $P(G)=P_e(G)$;
\item $G$ is an EPPO group;
\item the Gruenberg--Kegel graph of $G$ is null.
\end{enumerate}
\end{thm}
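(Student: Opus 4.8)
The plan is to prove the equivalence of the three conditions by establishing the cycle of implications $(b)\Rightarrow(c)\Rightarrow(a)\Rightarrow(b)$, exploiting the fact that the only ``new'' edges in $P_e(G)$ relative to $P(G)$ come from a cyclic subgroup $\langle x,y\rangle$ whose order is divisible by two distinct primes.

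First I would dispatch $(b)\Leftrightarrow(c)$, which is essentially a restatement of definitions: the Gruenberg--Kegel graph has an edge $p\sim q$ precisely when $G$ contains an element of order $pq$, and $G$ contains an element whose order is divisible by two distinct primes $p,q$ if and only if it contains an element of order $pq$ (take an appropriate power). So $G$ is EPPO iff no element has order divisible by two distinct primes iff the Gruenberg--Kegel graph has no edges, i.e.\ is null.

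Next, for $(b)\Rightarrow(a)$: suppose $G$ is EPPO and let $\{x,y\}$ be an edge of $P_e(G)$, so $C=\langle x,y\rangle$ is cyclic, of order $p^k$ for some prime $p$ (since $x$, being in $C$, has prime power order, and similarly the generator of $C$ has prime power order). In a cyclic group of prime power order the subgroups are totally ordered by inclusion, so one of $\langle x\rangle$, $\langle y\rangle$ contains the other; hence one of $x,y$ is a power of the other, and $\{x,y\}$ is an edge of $P(G)$. Thus $E(P_e(G))\subseteq E(P(G))$, and since the reverse inclusion always holds, $P(G)=P_e(G)$.

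Finally, for $(a)\Rightarrow(b)$ I would argue the contrapositive: if $G$ is not EPPO, it has an element $z$ of order $n$ divisible by two distinct primes $p$ and $q$. Inside $\langle z\rangle\cong C_n$, pick $g$ of order $n/p$ and $h$ of order $p$ (or more symmetrically, elements of orders $n/p^a$ and $n/q^b$ where $p^a\|n$, $q^b\|n$) chosen so that neither is a power of the other but $\langle g,h\rangle$ is cyclic; concretely $g=z^p$ and $h=z^{n/p}$ works since $o(g)=n/p$ does not divide $o(h)=p$ (as $n/p$ is divisible by $q$) and $o(h)=p$ does not divide $o(g)=n/p$ (as $p^a\|n$ forces $p\nmid n/p \cdot (\text{correction})$—here one must choose the exponent on $p$ carefully, taking $h$ of order not dividing $o(g)$). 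Then $\{g,h\}\in E(P_e(G))\setminus E(P(G))$, so $P(G)\ne P_e(G)$. The only delicate point, and the main thing to get right rather than a genuine obstacle, is arranging in this last step that the two chosen elements of $\langle z\rangle$ are genuinely nonadjacent in the power graph; this is a routine divisibility check: it suffices to take $g$ of order divisible by $q$ but not by the full $p$-part, and $h$ of order divisible by $p$ but not by the full $q$-part, e.g.\ $g=z^{p^a}$ and $h=z^{q^b}$ where $p^a\|n$ and $q^b\|n$, whose orders are $n/p^a$ and $n/q^b$, neither dividing the other.
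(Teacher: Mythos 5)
The paper states this theorem as a known result and offers no proof of its own, so there is nothing internal to compare against; your argument is correct and is the standard one. The cycle $(b)\Leftrightarrow(c)$, $(b)\Rightarrow(a)$, $(a)\Rightarrow(b)$ works: for $(b)\Rightarrow(a)$ the key facts are that the order of the cyclic group $\langle x,y\rangle$ is the order of one of its generators, hence a prime power under EPPO, and that the subgroups of a cyclic $p$-group are totally ordered, so one of $x,y$ lies in the cyclic group generated by the other. Your instinct to be careful in $(a)\Rightarrow(b)$ was right: the first concrete choice $g=z^{p}$, $h=z^{n/p}$ genuinely fails when $p^{2}\mid n$, since then $o(h)=p$ divides $o(g)=n/p$ and, by uniqueness of subgroups of each order in $\langle z\rangle$, $h$ is a power of $g$. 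Your corrected choice $g=z^{p^{a}}$, $h=z^{q^{b}}$ with $p^{a}$ and $q^{b}$ the exact $p$- and $q$-parts of $n$ does the job, as neither of $n/p^{a}$, $n/q^{b}$ divides the other; an even quicker route is to replace $z$ by a power $w$ of order $pq$ and take $w^{p}$, $w^{q}$ of orders $q$ and $p$, which are adjacent in $P_e(G)$ but not in $P(G)$.
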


The study of groups satisfying this condition was begun by Higman~\cite{higman}
in 1957, and all simple EPPO groups were found by Suzuki~\cite{suzuki} in 1962;
the complete determination of these groups was given by Bannuscher and Tiedt in
1994~\cite{bt}.

The following problem comprises a generalization of this theorem.

\begin{prob}
Let $p$ be a monotone graph parameter (that is, if $\Gamma$ is a spanning
subgraph of $\Delta$ then $p(\Gamma)\le p(\Delta)$). Determine the finite groups
for which $p(P(G))=p(P_e(G))$.
\end{prob}

Theorem~\ref{t:enhanced} shows that, if $p$ is the matching number, then the
solution is ``all finite groups''. Also, it is easy to show that, if $p$ is
the clique number, then the solution is ``all groups where the largest order
of an element is a prime power''.

\section*{Acknowledgments}
The author Swathi V V acknowledges the support of Council of Scientific and Industrial Research, India (CSIR) (Grant No-09/874(0029)/2018-EMR-I), and DST, Government of India,`FIST' (No.SR/FST /MS-I/2019/40).

The collaboration of the authors was made possible by the Research Discussion
on Graphs and Groups (RDGG) at CUSAT, Kochi, India, organised by Vijayakumar
Ambat and Aparna Lakshmanan S. We are grateful to them for this opportunity,

\end{document}